\def\BState{\State\hskip-\ALG@thistlm}
\theoremstyle{definition} 
\newtheorem{example}{Example}[section]
\theoremstyle{plain} 
\newtheorem{theorem}{Theorem}[section]
\newtheorem{corollary}{Corollary}[section]
\newtheorem{lemma}{Lemma}[section]
\newtheorem{proposition}{Proposition}[section]
\newcommand{\cS}{\mathcal{S}}
\newcommand{\sn}{\mathrm{sn}}
\DeclareMathOperator{\outdeg}{outdeg}
\DeclareMathOperator{\val}{val}
\DeclareMathOperator{\gon}{gon}
\title{Uniform scrambles on graphs}
\author[1]{Lisa Cenek}
\author[2]{Lizzie Ferguson}
\author[3]{Eyobel Gebre}
\author[4]{Cassandra Marcussen}
\author[5]{Jason Meintjes}
\author[2]{Ralph Morrison}
\author[2]{Liz Ostermeyer}
\author[6]{Shefali Ramakrishna}
\affil[1]{University of Illinois Chicago, Chicago, IL, USA}
\affil[2]{Williams College, Williamstown, MA, USA}
\affil[3]{University of Pennsylvania, Philadelphia, PA, USA}
\affil[4]{Harvard University, Cambridge, MA, USA}
\affil[5]{San Francisco State Universityt, San Francisco, CA, USA}
\affil[6]{Cornell University, Ithaca, NY, USA}
\date{}                     
\begin{document}

\maketitle

\begin{abstract}
A scramble on a connected multigraph is a collection of connected subgraphs that generalizes the notion of a bramble.  The maximum order of a scramble, called the scramble number of a graph, was recently developed as a tool for lower bounding divisorial gonality.  We present results on the scramble of all connected subgraphs with a fixed number of vertices, using these to calculate scramble number and gonality both for large families of graphs, and for specific examples like the \(4\)- and \(5\)-dimensional hypercube graphs.  We also study the computational complexity of the egg-cut number of a scramble.
\end{abstract}

\section{Introduction}
A \emph{scramble} on a graph \(G\) is a collection \(\mathcal{S}=\{V_1,\ldots,V_\ell\}\) of connected subsets \(V_i\subset V(G)\); we call each \(V_i\) an \emph{egg} of the scramble.  The \emph{order} of \(\mathcal{S}\) is the minimum of two numbers:  \(h(\mathcal{S})\), the minimum size of a hitting set of vertices for \(\mathcal{S}\); and \(e(\mathcal{S})\), the minimum number of edges required to be deleted to put two eggs into separate components.  The \emph{scramble number} \(\sn(G)\) of a graph is the maximum possible order of a scramble.

Scrambles were introduced in \cite{scramble} as a generalization of brambles\footnote{A bramble is also a collection of connected subsets of vertices of a graph, with the added condition that the union of any two of those subsets is also connected.  The order of a bramble is simply the minimum size of a hitting set.} for the purpose of studying \emph{(divisorial) gonality}.  The gonality \(\gon(G)\) of a graph measures how difficult it is to win a certain ``chip-firing'' game on the graph, with motivation coming from algebraic geometry \cite{baker}.  The key property of scramble number that helps in the study of gonality is its capacity to provide the lower bound \(\sn(G)\leq\gon(G)\).  This has been used to compute the gonality of many graphs, including stacked prism graphs and toroidal grid graphs \cite{scramble}; many other Cartesian product graphs \cite[\S 5]{echavarria2021scramble}; and any simple graph with sufficiently high minimum valence \cite[\S 3]{echavarria2021scramble}.

In this paper we introduce the
\emph{\(k\)-uniform scramble} \(\mathcal{E}_k\) on a graph \(G\), whose eggs are the connected subsets of \(V(G)\) consisting of exactly \(k\) vertices.  We prove that the order of this scramble is determined by two previously studied graph invariants:  the \(k\)-restricted edge-connectivity \(\lambda_k(G)\) (also known as order \(k\) edge-connectivity), and the \(\ell\)-component independence number \(\alpha_\ell^c(G)\) (these invariants are defined in Section \ref{section:parameters}).  In particular, we prove in Theorem \ref{theorem:order_ek} that the order of \(\mathcal{E}_k\) is
\[||\mathcal{E}_k||=\min\{\lambda_k(G),n-\alpha_{k-1}^c(G)\}\]
where \(n\) denotes the number of vertices of \(G\).  We use this to prove our main theorem.

\begin{theorem}\label{theorem:main}
Fix \(\ell\geq 3\), and let \(G\) be a graph with girth at least \(\ell\).  Then \(\gon(G)\leq n-\alpha_{\ell-2}^c(G)\). Moreover, if \(\lambda_{\ell-1}(G)\geq n-\alpha_{\ell-2}^c(G)\), then
\[\sn(G)=\gon(G)=n-\alpha_{\ell-2}^c(G).\]
\end{theorem}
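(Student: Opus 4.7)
The plan is to prove the two inequalities separately, doing most of the work on the unconditional gonality bound and then invoking Theorem~\ref{theorem:order_ek} to pin down $\sn(G)$. The ``moreover'' half is almost immediate: by Theorem~\ref{theorem:order_ek}, the $(\ell-1)$-uniform scramble satisfies $||\mathcal{E}_{\ell-1}|| = \min(\lambda_{\ell-1}(G), n - \alpha_{\ell-2}^c(G))$, which under the edge-connectivity hypothesis equals $n - \alpha_{\ell-2}^c(G)$. Hence $\sn(G) \geq n - \alpha_{\ell-2}^c(G)$, and combining with the general inequality $\sn(G) \leq \gon(G)$ from \cite{scramble} together with the first inequality of the theorem forces all three quantities to equal $n - \alpha_{\ell-2}^c(G)$.

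So the real content is proving the unconditional bound $\gon(G) \leq n - \alpha_{\ell-2}^c(G)$. I would fix a maximum $(\ell-2)$-component independent set $S$ and show that the divisor $D = \sum_{w \in V(G) \setminus S} w$, of degree $n - \alpha_{\ell-2}^c(G)$, has rank at least $1$. Two structural consequences of the girth hypothesis are central. First, each connected component $T$ of $G[S]$ has at most $\ell-2$ vertices and is therefore too small to contain a cycle of length at least $\ell$, so $T$ is a tree. Second, if $w \in V(G) \setminus T$ had two distinct neighbors $t_1, t_2 \in T$, then the unique path from $t_1$ to $t_2$ inside $T$ would use at most $\ell-3$ edges, and adjoining $w t_1$ and $w t_2$ would produce a cycle of length at most $\ell-1$, contradicting the girth hypothesis. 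Consequently each vertex of $V(G) \setminus T$ has at most one neighbor in $T$.

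To check $r(D) \geq 1$, fix $v \in V(G)$ and verify that $D - v$ is equivalent to an effective divisor. The case $v \notin S$ is immediate. If $v \in S$, let $T$ be its tree component in $G[S]$. The natural first step is to fire $V(G) \setminus T$: each $u \in T$ gains $\deg_G(u) - \deg_T(u) \geq 0$ chips, while each $w \in V(G) \setminus T$ loses at most one chip by the second structural observation, so the resulting divisor is effective. If $v$ has any neighbor outside $T$ this already places a chip at $v$ and we are done. The main obstacle, and the technical heart of the proof, is when $v$'s neighborhood lies entirely inside $T$: no chips land on $v$ directly, and we must push chips inward from other $T$-vertices by further chip-firing on $G$. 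My plan is to iterate Dhar's burning algorithm with respect to $v$, firing the unburnt set at each step until we reach the $v$-reduced divisor equivalent to $D$; the tree structure of $T$, combined with the girth-enforced control on edges crossing the boundary of $T$, should force this $v$-reduced divisor to carry at least one chip at $v$. Managing the firing sets so that effectivity is preserved throughout $V(G)$ while chips accumulate at $v$ is where the argument will demand the most care.
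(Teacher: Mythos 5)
Your reduction of the ``moreover'' half to Theorem \ref{theorem:order_ek} together with $\sn(G)\leq\gon(G)$ is exactly the paper's argument and is complete. The two structural facts you extract from the girth hypothesis --- that every component $T$ of $G[S]$ is a tree, and that every vertex outside $T$ has at most one neighbor in $T$ --- are also precisely the facts the paper establishes. But there is a genuine gap in how you finish the unconditional bound $\gon(G)\leq n-\alpha_{\ell-2}^c(G)$: in the case where $v\in S$ has all of its neighbors inside its tree component $T$, your single firing of $V(G)\setminus T$ places no chip on $v$, and the continuation is only a plan (``iterate Dhar's burning algorithm \dots should force this $v$-reduced divisor to carry at least one chip at $v$''). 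This is exactly the nontrivial part: once you begin firing complements of proper subsets of $T$ to push chips inward, vertices of $S^C$ adjacent to $T$ can be asked to pay again, and you give no bookkeeping showing that effectivity survives while a chip reaches $v$. As written, the hardest case is asserted rather than proved.

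The gap is easy to close with the paper's own toolkit, and this is what the paper does: your two structural observations say verbatim that $S^C$ is a strong separator (each component of $G-S^C$ is a tree, and each vertex of $S^C$ is incident to at most one edge into each such component), so Lemma \ref{lemma:strong_separator} immediately yields $\gon(G)\leq|S^C|=n-\alpha_{\ell-2}^c(G)$ with no chip-firing at all. In effect you have re-derived the hypotheses of that lemma and then set out to reprove the lemma itself from scratch; either cite it, or supply the full firing argument for interior vertices of $T$.
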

We use this theorem to determine gonality for graphs of increasing girth and decreasing minimum valence (Theorems \ref{theorem:girth3} through \ref{theorem:girth5}), as well as for bipartite graphs with high minimum valence (Theorems \ref{theorem:bipartite1} and \ref{theorem:bipartite2}) and for the hypercube graphs \(Q_4\) and \(Q_5\) (Proposition \ref{theorem:gonalityQ4} and Theorem \ref{theorem:gonalityQ5}).

Our paper is organized as follows.  In Section \ref{section:parameters} we present necessary background on graph parameters and some useful lemmas.  In Section \ref{section:k_uniform} we study the \(k\)-uniform scramble, and prove Theorem \ref{theorem:main} as well as many subsequent results.  In Section \ref{section:np_hard} we study questions related to the computational complexity of the number \(e(\mathcal{S})\).

\medskip

\section{Graph parameters}
\label{section:parameters}

We let \(G=(V,E)\) be an undirected multigraph with vertex set \(V(G)\) and edge multiset \(E(G)\), where edges are not allowed from a vertex to itself.  Throughout this paper  we assume our graph \(G\) is connected, so that there exists a path from any vertex to any other vertex.  The \emph{valence} \(\val(v)\) of a vertex \(v\) is the number of edges incident to it.  We let \(\delta(G)\) denote the minimum valence of a vertex in \(V(G)\).  Given two subsets (possibly overlapping) \(A,B\subset V(G)\), we let \(E(A,B)\subset E(G)\) be the multiset of edges that have one endpoint in \(A\) and the other endpoint in \(B\). The \emph{outdegree} of a set \(S\subset V(G)\), denoted \(\textrm{outdeg}(S)\), is \(|E(S,S^C)|\), where \(S^C\) denotes the complement of \(S\) in \(V(G)\). Given \(S\subset V(G)\), we let \(G[S]\) denote the subgraph with vertex set \(S\) and edge set \(E(S,S)\).  If \(G[S]\) is a connected subgraph, we call \(S\) \emph{connected}.

The \emph{girth} of a graph is the length of a smallest cycle in the graph, taken to be infinity if the graph has no cycles.  A graph has girth at least \(3\) if and only if it is a simple graph; and a simple graph has girth at least \(4\) if and only if no three vertices form a triangle \(K_3\).  For this reason simple graphs of girth at least \(4\) are also called \emph{triangle-free} graphs. 

Given a multiset \(T\subset E(G)\), we let \(G-T\) denote the subgraph of \(G\) with vertex set \(V(G)\) and edge multiset \(E(G)\setminus T\). For \(G\) a graph on more than \(1\) vertex, the \emph{edge-connectivity} \(\lambda(G)\) is the minimum cardinality of a set \(T\subset E(G)\) with \(G-T\) disconnected.  More generally, for \(k\geq 1\), the \emph{\(k\)-restricted edge-connectivity} of a graph \(G\), denoted \(\lambda_k(G)\), is the minimum size of a set $T \subset E(G)$ (if it exists)  such that $G - T$ is disconnected and each connected component of $G -T$ contains at least $k$ vertices. If no such set \(T\) exists, we set \(\lambda_k(G)=\infty\) throughout this paper\footnote{This is not necessarily the standard convention; for instance, it implies that \(\lambda_1(K_1)=\infty\) rather than the standard convention of \(\lambda_1(K_1)=0\).}. When \(\lambda_k(G)\) is finite, we say that \(G\) is \(\lambda_k\)-connected.

For a graph \(G\) we let \(\xi_k(G)\) denote the minimum outdegree of any connected subgraph of \(G\) on \(k\)-vertices. 
Letting \(G\) be a graph with $\lambda_k(G)\leq \xi_k(G)$, we say that $G$ is \emph{$\lambda_k$-optimal} if $\lambda_k(G) = \xi_k(G)$.  Since \(\xi_k(G)\) is in practice often easier to compute than \(\lambda_k(G)\), it is useful to have classes of graphs that are known to be \(\lambda_k\)-optimal.  We recall the following results in this vein.

\begin{lemma}[\cite{wang_li}]\label{lemma:wang_li}
Let \(G\) be a \(\lambda_2\)-connected graph.  If \(\val(u)+\val(v)\geq |V(G)|+1\) for all pairs \(u,v\) of non-adjacent vertices, then \(G\) is \(\lambda_2\)-optimal.
\end{lemma}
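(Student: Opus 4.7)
The plan is to establish $\lambda_2(G) = \xi_2(G)$ by proving both inequalities. The inequality $\lambda_2(G) \leq \xi_2(G)$ is a definition-level observation: for any connected two-vertex subgraph $S$, the edge set $E(S, S^C)$ separates $S$ from its complement, and the $\lambda_2$-connectivity hypothesis lets one extract a genuine $\lambda_2$-cut of size at most $|E(S, S^C)|$. The substantive direction is $\lambda_2(G) \geq \xi_2(G)$, which I would prove by contradiction.

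Assume $\lambda_2(G) < \xi_2(G)$ and let $T$ be a minimum cut certifying $\lambda_2(G)$; choose a smallest component $A$ of $G - T$, and set $B = V(G) \setminus A$, so minimality of $T$ forces $T = E(A, B)$ and $|A|, |B| \geq 2$. I would first dispose of $|A| = 2$: the two vertices of $A$ are adjacent (since $A$ is a component, hence connected), and the outdegree of that pair equals $|E(A, B)|$, giving $\xi_2(G) \leq \lambda_2(G)$, a contradiction. So assume $|A| \geq 3$. Next, split on whether some $x \in A$, $y \in B$ are nonadjacent. In that subcase, the hypothesis gives $\val(x) + \val(y) \geq |V(G)| + 1 = |A| + |B| + 1$, while the trivial bounds on the number of edges from $x$ into $A\setminus\{x\}$ and from $y$ into $B\setminus\{y\}$, combined with the disjointness of the edges from $x$ to $B$ and from $y$ to $A$ inside $E(A, B)$, yield a lower bound on $|E(A, B)|$. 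In the opposite subcase every vertex of $A$ is joined to every vertex of $B$, so $|E(A, B)| \geq |A|\cdot |B| \geq 6$.

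In either subcase I would then upgrade to the desired contradiction $|E(A, B)| \geq \xi_2(G)$ by picking an internal edge $uv$ of $G[A]$ (which exists because $G[A]$ is connected on at least three vertices) and using that the outdegree of $\{u, v\}$ is at least $\xi_2(G) > |E(A, B)|$, while the aggregate $\sum_{w \in A} \val(w) = 2|E(G[A])| + |E(A, B)|$ is tightly controlled by the valence hypothesis applied to any nonadjacent pair inside $A$. An averaging argument over the internal edges of $G[A]$ then produces one whose outdegree is at most $|E(A, B)|$, the sought contradiction. I expect this final averaging step to be the main obstacle, since it requires turning an aggregate outdegree bound into a statement about a single well-chosen edge; the rest of the proof is structural case analysis on the shape of the minimum cut.
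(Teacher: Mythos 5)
First, a point of comparison: the paper gives no proof of this statement at all --- it is imported verbatim from \cite{wang_li} as a known result --- so your attempt is necessarily being measured against the literature proof rather than anything in the text. Your overall skeleton (prove $\lambda_2(G)\leq\xi_2(G)$ and $\lambda_2(G)\geq\xi_2(G)$ separately; for the latter take a minimum restricted cut $E(A,B)$, dispose of $|A|=2$, and split on whether all cross pairs are adjacent) matches the standard structure. Two remarks on the easier parts: the inequality $\lambda_2(G)\leq\xi_2(G)$ is \emph{not} a definition-level observation --- deleting $E(S,S^C)$ for an edge $S=\{u,v\}$ can isolate vertices of $S^C$, so one genuinely needs the Esfahanian--Hakimi argument (or the degree hypothesis) to repair the cut; and in the all-adjacent subcase the bound $|E(A,B)|\geq 6$ is not by itself enough --- you need $|E(A,B)|\geq |A|\,|B|\geq 2n-4\geq \xi_2(G)$, which does hold but must be said.

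The genuine gap is exactly where you flag it, and it is not a technicality: the ``averaging over internal edges of $G[A]$'' step does not work as described, and the endgame is aimed at the wrong target. The hypothesis $\val(u)+\val(v)\geq n+1$ constrains only \emph{nonadjacent} pairs, so it says nothing about the internal edges of $G[A]$ (indeed $G[A]$ may be a clique, in which case every internal edge has outdegree at least $2(|A|-2)+d_B(u)+d_B(v)$, which for large $|A|$ has no reason to drop to $|E(A,B)|$); moreover the identity $\sum_{w\in A}\val(w)=2|E(G[A])|+|E(A,B)|$ combined with the hypothesis gives \emph{lower} bounds on degree sums, which push in the wrong direction for producing an edge of \emph{small} outdegree. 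The missing idea in the actual proof is to aim the contradiction at the hypothesis itself rather than at $\xi_2$: one shows that if $\lambda_2(G)<\xi_2(G)$ then each side of the minimum restricted cut contains a vertex with no neighbor on the other side, say $x\in A$ and $y\in B$. These two vertices are automatically nonadjacent, and $\val(x)+\val(y)\leq(|A|-1)+(|B|-1)=n-2<n+1$, contradicting the Ore condition directly. (Your own computation already points at this: for nonadjacent $x\in A$, $y\in B$ the hypothesis forces $d_B(x)+d_A(y)\geq 3$, which is precisely the statement that $x$ and $y$ cannot both be ``interior'' --- so the real work, establishing that such interior vertices exist on both sides when the cut is smaller than $\xi_2$, is the lemma your sketch never proves.) As written, the proposal is an outline with the crux acknowledged but unsolved.
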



\begin{lemma}[Corollary 3.2 in \cite{mv}]\label{lemma:mv}
Every connected triangle-free graph of order \(n\) with \(\delta(G)\geq 3\) and \(\xi_3(G)\geq n+1\) is \(\lambda_3\)-optimal.
\end{lemma}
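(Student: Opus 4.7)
The plan is to argue by contradiction: suppose that $\lambda_3(G) < \xi_3(G)$, choose a minimum cut $T$ witnessing $\lambda_3(G)$, let $A$ be the vertex set of a component of $G - T$ with $|A| \le n/2$, and set $B = V(G) \setminus A$. By the definition of $\lambda_3$ we have $|A| \ge 3$, $|E(A,B)| \le |T| = \lambda_3(G) < \xi_3(G)$, and every component of $G[B]$ has at least three vertices.

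The first, easy case is $|A| = 3$: then $A$ itself is a connected $3$-subset of $V(G)$ with outdegree $|E(A,B)| < \xi_3(G)$, directly contradicting the definition of $\xi_3$. So I may assume $|A| \ge 4$.

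For this remaining case I would deploy the two hypotheses that have not yet been used. Triangle-freeness forces every connected $3$-subset $S \subseteq V(G)$ to induce exactly two edges (a path), so $\xi_3(G) \ge n+1$ translates into the valence inequality
\[
\val(u) + \val(v) + \val(w) \;\ge\; n + 5
\]
for every path $u\text{-}v\text{-}w$ in $G$, while $\delta(G) \ge 3$ guarantees a rich supply of such paths inside each component of $G[A]$. Choosing a connected $3$-subset $S \subseteq A$ that minimizes $|E(S, A \setminus S)|$ and using
\[
|E(S,B)| \;+\; |E(S, A \setminus S)| \;=\; \sum_{v \in S}\val(v) - 4 \;\ge\; n+1,
\]
together with $|E(S,B)| \le |T| < \xi_3(G)$, forces a large number of edges between $S$ and $A \setminus S$. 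Combining this with the Mantel bound $|E(G[A])| \le \lfloor |A|^2/4 \rfloor$ for the triangle-free graph $G[A]$ and the double-counting identity $\sum_{v \in A}\val(v) = 2|E(G[A])| + |T|$ should close out the argument.

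The step I expect to be the main obstacle is the last one: converting the local, per-triple valence inequality $\val(u) + \val(v) + \val(w) \ge n+5$ into a \emph{global} numerical contradiction on the fragment $A$. The estimates are tight, so one likely has to choose $A$ to be \emph{minimal} among all sides of a minimum $3$-restricted cut (so as to control $|E(S, A \setminus S)|$) and to analyze separately a handful of small boundary cases, say $|A| \in \{4,5,6\}$, before the Mantel-plus-valence averaging argument takes over.
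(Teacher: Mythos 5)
The paper does not prove this statement at all; it imports it verbatim as Corollary 3.2 of \cite{mv}, so there is no internal proof to compare against and your attempt has to stand on its own. As it stands it does not: it is a plan whose decisive step is explicitly deferred. The setup is fine --- taking a minimum $3$-restricted cut $T$, a side $A$ with $|A|\le n/2$, disposing of $|A|=3$ directly from the definition of $\xi_3$, and the identity $|E(S,B)|+|E(S,A\setminus S)|=\sum_{v\in S}\val(v)-4\ge n+1$ for a path $S\subseteq A$ are all correct. But the entire content of the lemma is the case $|A|\ge 4$, and there you only list ingredients (Mantel's bound on $G[A]$, the per-path valence inequality, minimality of the cut) and assert that they ``should close out the argument,'' while simultaneously conceding that the estimates are tight and that several small cases would need separate treatment. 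Nothing in the proposal shows these ingredients actually combine into a contradiction: from $|E(S,B)|\le\lambda_3(G)<\xi_3(G)$ you get only $|E(S,A\setminus S)|\ge \xi_3(G)-\lambda_3(G)\ge 1$, and it is not indicated how this, together with the Mantel count and the double-counting identity, produces a numerical impossibility for general $|A|$. That unwritten step is the theorem, and it is missing.

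There is also a second, quieter gap: your contradiction argument, even if completed, establishes only $\lambda_3(G)\ge\xi_3(G)$. Optimality is the equality $\lambda_3(G)=\xi_3(G)$, so you also need $\lambda_3(G)\le\xi_3(G)$, which is not automatic: deleting the out-edges of a minimum-outdegree connected triple $S$ isolates $S$ as a component of order $3$, but the components of $G[S^{C}]$ may have fewer than three vertices, so this edge set need not witness $\lambda_3$; one must also rule out $\lambda_3(G)=\infty$ (the paper's convention when no $3$-restricted cut exists). This direction is a known but nontrivial result in the restricted-edge-connectivity literature and would need to be either proved or cited before the lemma is established.
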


\begin{lemma}[Corollary 4.9 in \cite{hmm}]\label{lemma:hmm}
Let \(G\) be a connected and triangle-free graph of order \(n\geq 2k\).  If \(\delta(G)\geq \frac{1}{2}\left(\lfloor n/2\rfloor +k\right)\), then \(G\) is \(\lambda_k\)-optimal.
\end{lemma}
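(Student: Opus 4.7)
The plan is to argue by contradiction. Assume $\lambda_{k}(G) < \xi_{k}(G)$ strictly and derive a contradiction from the hypotheses of triangle-freeness and high minimum valence.

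First, select a minimum $\lambda_{k}$-edge-cut $T$, so $G - T$ has at least two components, each of size at least $k$. Let $A$ be a component of smallest cardinality, set $B := V(G) \setminus A$, and write $a := |A|$. Since $|A|$ and $|B|$ are both at least $k$, and $A$ is smallest among components, we have $k \le a \le \lfloor n/2 \rfloor$. Because $G[A]$ is triangle-free, Mantel's theorem gives $|E(G[A])| \le \lfloor a^{2}/4 \rfloor$, and combining this with $\sum_{v\in A}\val(v) \ge a\,\delta(G)$ yields
\[
\lambda_{k}(G) \;=\; |E(A,B)| \;=\; \sum_{v\in A}\val(v) - 2|E(G[A])| \;\ge\; a\,\delta(G) - \tfrac{a^{2}}{2}.
\]
If $a = k$, then $A$ itself is a connected $k$-vertex subset witnessing $\xi_{k}(G) \le |E(A,B)| = \lambda_{k}(G)$, contradicting our assumption. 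So we may assume $a > k$ from now on.

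In this remaining case, the goal is to construct a connected $k$-vertex subset $S \subseteq A$ with $\outdeg(S) \le \lambda_{k}(G)$, which again forces $\xi_{k}(G) \le \lambda_{k}(G)$ and produces the desired contradiction. The strategy is to peel vertices off $A$ one at a time, at each step selecting a non-cut vertex $v$ of the current residual connected subgraph with $\val_{B}(v) \ge \val_{S}(v)$, so that
\[
\outdeg(S \setminus \{v\}) \;=\; \outdeg(S) + \val_{S}(v) - \val_{B}(v) \;\le\; \outdeg(S).
\]
The existence of a vertex satisfying $\val_{B}(v) \ge \val_{S}(v)$ at each stage is reduced by an averaging argument to the inequality $|E(S,B)| \ge 2|E(G[S])|$ on the current subset $S$, and this in turn follows by applying Mantel's theorem to the triangle-free subgraph $G[S]$ together with the hypothesis $\delta(G) \ge \tfrac{1}{2}(\lfloor n/2\rfloor + k)$ and $|S| \ge k$. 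Connectivity is preserved by standard graph theory, since every finite connected graph on at least two vertices contains a non-cut vertex.

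The main obstacle is the last step: verifying that the invariant $|E(S,B)| \ge 2|E(G[S])|$ actually persists across all $a - k$ peeling stages requires careful quantitative tracking of how both sides evolve as a vertex is removed, and it is precisely here that the exact threshold $\tfrac{1}{2}(\lfloor n/2\rfloor + k)$ in the minimum-degree hypothesis does the work, balancing Mantel's bound on the interior edges of $G[S]$ against the degree sum $\sum_{v \in S}\val(v)$. If this invariant could not be sustained, one might instead need to choose the target connected $k$-subset $S$ globally via a minimum-outdegree extremal argument inside $A$, which is the natural fallback.
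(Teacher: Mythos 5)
This lemma is not proved in the paper at all; it is imported verbatim as Corollary 4.9 of \cite{hmm}, so there is no internal argument to compare yours against. Judged on its own terms, your outline follows the standard template for \(\lambda_k\)-optimality results (minimum restricted cut, smaller side \(A\), Mantel's theorem, then locate a connected \(k\)-subset of \(A\) with small outdegree), but it has a genuine gap at exactly the step you flag, and that step in fact fails. The invariant \(|E(S,S^C)|\geq 2|E(G[S])|\) driving your averaging argument is equivalent to \(\sum_{v\in S}\val(v)\geq 4|E(G[S])|\), and Mantel only caps \(|E(G[S])|\) at \(|S|^2/4\), so the natural sufficient condition is \(\delta(G)\geq |S|\). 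At the first stage \(|S|=a\) may be as large as \(\lfloor n/2\rfloor\) while \(\delta(G)\) is only guaranteed to be about \(n/4+k/2\), so this is unavailable once \(n\) is large relative to \(k\). Worse, under your contradiction hypothesis the invariant is provably false for large \(a\): each vertex of \(A\) has at least \(\delta(G)\) neighbors, of which at most \(|E(A,B)|=\lambda_k(G)<\xi_k(G)\) in total leave \(A\), so \(2|E(G[A])|\geq a\,\delta(G)-\lambda_k(G)\), which dwarfs \(|E(A,B)|=\lambda_k(G)\) when \(a\) is of order \(n\) and \(k\) is fixed. So the peeling criterion cannot even get started in general. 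A separate (smaller) gap: even where the averaging inequality holds it produces \emph{some} vertex with at least half its remaining neighbors outside \(S\), while connectivity requires deleting a non-cut vertex of \(G[S]\); nothing forces one vertex to satisfy both conditions simultaneously.

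There is also a missing half of the statement: refuting \(\lambda_k(G)<\xi_k(G)\) only gives \(\lambda_k(G)\geq\xi_k(G)\). For \(\lambda_k\)-optimality you also need \(\lambda_k(G)\leq\xi_k(G)\), together with \(\lambda_k\)-connectedness (that a \(k\)-restricted cut exists at all). This direction is not automatic: if \(S\) realizes \(\xi_k(G)\), deleting \(E(S,S^C)\) leaves \(S\) as a component of size exactly \(k\), but \(G[S^C]\) may break into components of size less than \(k\), so \(E(S,S^C)\) need not be a valid \(k\)-restricted edge cut; establishing \(\lambda_k\leq\xi_k\) is itself a theorem in this literature. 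In short, your skeleton is the standard one, but the quantitative heart of the proof --- where the threshold \(\frac{1}{2}\left(\lfloor n/2\rfloor+k\right)\) actually gets used --- is absent, and your own fallback (a global extremal choice of a connected \(k\)-subset inside \(A\) rather than greedy peeling) is closer to what a complete proof requires.
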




A set \(S\subset V(G)\) is called an \emph{independent set} if \(G[S]\) consists of isolated vertices, or equivalently if \(E(S,S)=\emptyset\).  The \emph{independence number} \(\alpha(G)\) of a graph \(G\) is the maximum cardinality of an independent set.  There are a number of ways of generalizing independence number, one of which we recall here.  For \(\ell\geq 0\), an \emph{\(\ell\)-component independent set} \(S\subset V(G)\) is one such that \(G[S]\) has all components of order at most \(\ell\).  Thus a \(1\)-component independent set is a usual independent set, and the only \(0\)-component independent set is the empty set.  The \emph{\(\ell\)-component independence number} of a graph \(G\), denoted \(\alpha^c_\ell(G)\), is then the maximum size of an \(\ell\)-component independent set.  Thus \(\alpha_0^c(G)=0\);  \(\alpha_1^c(G)=\alpha(G)\), the usual independence number of \(G\); and \(\alpha_2^c(G)=\textrm{diss}(G)\), the \emph{dissociation number} of \(G\), which is the largest collection of vertices to induce a subgraph consisting of a disjoint union of vertices and \(K_2\)'s.

We now recall our scramble definition\footnote{Our definition is phrased slightly differently from the original definition in \cite{scramble}, but is equivalent by \cite[\S 2]{echavarria2021scramble}.} and notation. A \emph{scramble}  on a graph \(G\) is a collection \(\mathcal{S}=\{V_1,\ldots,V_\ell\}\) of connected subsets \(V_i\subset V(G)\), where each $V_i$ is called an \emph{egg}.  A \emph{hitting set} of \(\mathcal{S}\) is a subset \(C\subset V(G)\) such that for all \(i\) we have \(C\cap V_i\neq\emptyset\).  We let \(h(\mathcal{S})\) denote the minimum size of a hitting set.  An \emph{egg-cut} of \(\mathcal{S}\) is a subset \(A\subset V(G)\) such that \(A\) and \(A^C\) each contain an egg; the \emph{size} of an egg-cut is \(|E(A,A^C)|\).  We let \(e(\mathcal{S})\) denote the minimum size of an egg-cut.  The \emph{order} of a scramble is then defined to be
\[||\mathcal{S}||=\min\{h(\mathcal{S}), e(\mathcal{S})\}.\]
The \emph{scramble number} \(\sn(G)\) of a graph is the maximum possible order of a scramble.

\begin{lemma}\label{lemma:min_with_connected}
Let \(\mathcal{S}\) be a scramble on a connected graph \(G\).  The number \(e(\mathcal{S})\) is unchanged if we restrict to egg-cuts \((A,A^C)\) such that \(A\) and \(A^C\) are both connected.
\end{lemma}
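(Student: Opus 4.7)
The plan is to take an arbitrary minimum egg-cut $(A, A^C)$ and produce from it another minimum egg-cut in which both sides induce connected subgraphs; the reverse inequality is automatic since we are taking the minimum over a subcollection of egg-cuts.

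First I will make the $A$-side connected. Let $E_1 \subset A$ and $E_2 \subset A^C$ be the eggs witnessing the egg-cut. Since $E_1$ is connected, it lies in a single component $A_1$ of $G[A]$, and I replace $(A, A^C)$ by $(A_1, V \setminus A_1)$. This is still an egg-cut because $A^C \subset V \setminus A_1$ contains $E_2$. Its size is
\[|E(A_1, V \setminus A_1)| = |E(A_1, A \setminus A_1)| + |E(A_1, A^C)| = 0 + |E(A_1, A^C)| \leq |E(A, A^C)|,\]
where the first term vanishes because $A_1$ is a full component of $G[A]$. By minimality this inequality is an equality, so $(A_1, V \setminus A_1)$ is also a minimum egg-cut and $A_1$ is connected.

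Next I will argue that $V \setminus A_1$ is automatically connected. Suppose to the contrary that $G[V \setminus A_1]$ decomposes into components $B_1, C_1, \ldots, C_r$ with $E_2 \subset B_1$ and $r \geq 1$. Because $G$ is connected, each $C_i$ has some edge to $V \setminus C_i$; that edge cannot go to $B_1$ or to another $C_j$ (they are distinct components of $G[V \setminus A_1]$), so it must go to $A_1$, giving $|E(A_1, C_i)| \geq 1$. Now consider the cut $(V \setminus B_1, B_1)$, which is an egg-cut since $E_1 \subset A_1 \subset V \setminus B_1$ and $E_2 \subset B_1$; its size equals $|E(B_1, A_1)|$ since $B_1$ is a full component of $G[V \setminus A_1]$ and therefore has no edges to the $C_i$. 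But then
\[|E(A_1, V \setminus A_1)| = |E(A_1, B_1)| + \sum_{i=1}^{r} |E(A_1, C_i)| \geq |E(B_1, A_1)| + r > |E(B_1, A_1)|,\]
contradicting the minimality of $(A_1, V \setminus A_1)$ established above. Hence $V \setminus A_1$ is connected as well.

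I do not expect any substantial obstacle. The one subtlety worth flagging is that a naive symmetric iteration — alternately replacing each side by the component containing its egg — could in principle oscillate. The argument above sidesteps this by appealing to the connectedness of $G$ itself to force every stray component $C_i$ to send an edge into $A_1$, producing a strictly smaller cut that immediately contradicts minimality rather than requiring another round.
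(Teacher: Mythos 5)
Your proof is correct and follows essentially the same route as the paper: pass to the component of $G[A]$ containing $E_1$, then note that any remaining components of the complement other than the one containing $E_2$ must send edges into $A_1$ because $G$ is connected. The only cosmetic difference is that the paper absorbs those stray components into the $E_1$-side and checks directly that the cut size does not increase, whereas you start from a minimum egg-cut and derive a contradiction from the strictly smaller cut $(V\setminus B_1, B_1)$; both yield the same conclusion.
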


\begin{proof}
Given an arbitrary egg-cut \((A,A^C)\), we will show that there exists another egg-cut \((A'',A''^C)\) with \(A''\), \(A''^C\) both connected and \(|E(A'',A''^C)|\leq |E(A,A^C)|\); it will follow that the minimum size of an egg-cut is achieved with both sets connected.

Let \((A,A^C)\) be an egg-cut, with \(V_1\subset A\) and \(V_2\subset A^C\) for some distinct eggs \(V_1,V_2\in\mathcal{S}\).  Since \(V_1\) is connected, there exists a connected component \(H_1\) of \(G[A]\) containing \(V_1\).  Letting \(A'=V(H_1)\), we have that \((A',A'^C)\) is an egg-cut, since \(V_1\subset A'\) and \(V_2\subset A'^C\). We also have \(|E(A',A'^C)|\leq |E(A,A^C)|\):  any edge from \(A'\) to \(A'^C\) must have been an edge from \(A'\) to \(A^C\), since \(A'\) was a component of \(G[A]\) and thus had no edges to any vertex in \(A-A'\).

If \(A'^C\) is connected, we are done.  If not, let \(H_2\) be the connected component of \(G[A'^C]\) containing \(V_2\), and let \(A''=V(H_2)^C\).  Then \((A'',A''^C)\) is an egg-cut, with \(V_1\subset A''\) and \(V_2\subset A''^C\).  We have that \(A''^C=V(H_2)\) is connected. In fact, so is \(A''\), as it is the union of \(A'\) with all vertices from the connected components of \(G[A'^C]\) besides \(H_2\); since $G$  is connected, \(A'\) shares edges with each such component.  Finally, by the same argument as before we have \(|E(A',A'^C)|\leq |E(A'',A''^C)|\leq |E(A,A^C)|\), as desired.

\end{proof}

\begin{example}\label{example:herschel_sn}
Let \(G\) be the Herschel graph, depicted twice in Figure \ref{figure:herschel}; this is the smallest polyhedral graph that does not have a Hamiltonian cycle.  
Let \(\mathcal{S}\) be the scramble on \(G\) consisting of all \(V_i\) such that \(G[V_i]\) is a path on three vertices.  We claim that \(h(\mathcal{S})=5\).  Certainly there exists a hitting set of cardinality \(5\), namely the smaller of the two partite sets.  

\begin{figure}[hbt]
    \centering
    \includegraphics{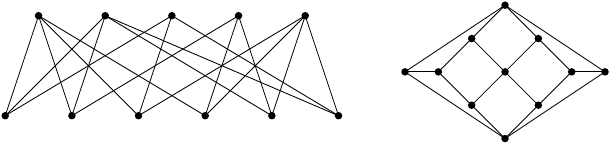}
    \caption{Two drawings of the Herschel graph.}
    \label{figure:herschel}
\end{figure}

Suppose for the sake of contradiction that there exists a hitting set \(C\) with \(|C|\leq 4\).  To reach a contradiction, it will suffice to show that some uncovered vertex has at least two uncovered neighbors.  Let \(A\) be the partite set with more elements from \(C\), and \(B\) be the other partite set.
If \(|A\cap C|=4\), then there exists at least one uncovered vertex in $A$ with at least three uncovered neighbors. If \(|A\cap C|=3\), there exist at least 2 uncovered vertices in \(A\), call them \(u\) and \(v\), with valence at least 3, so no single vertex in the other partite class $B$ will cover more than one neighbor of $u$ and one neighbor of $v$. This means both $u$ and $v$ have at least two unhit neighbors.  Finally, consider the case \(|A\cap C|=2\).  We remark that for any choice of \(w,z\in A\), we have $N(A\setminus\{w,z\})=B$. Because all of the unhit vertices in $A$ have valence at least 3, no two vertices in $B$ from \(C\) will alone be enough to reduce the number of unhit neighbors for each of the vertices in $A\setminus\{w,z\}$ below two. Hence, there is no hitting set with only four vertices, so $h(\cS)=5$.

Now we bound \(e(\cS)\).  By Lemma \ref{lemma:min_with_connected}, it suffices to consider egg-cuts \(A\) such that \(A\) and \(A^C\) are both connected, and without loss of generality we may assume \(|A|\leq |A^C|\); thus \(e(\mathcal{S})=\min\{\xi_3(G),\xi_4(G),\xi_5(G)\}\), since \(3\leq |A|\leq 5\). For \(\xi_3(G)\), we note that every connected subgraph on \(3\) vertices is a path \(P_3\), which will have outdegree at least \(3\delta(G)-4=9-4=5\), since each vertex has degree at least \(3\) but \(2\cdot 2\) edges are double-counted inside of \(P_3\); thus \(\xi_3(G)=5\).  For \(\xi_4(G)\), a connected subgraph on \(4\) vertices is either a tree with outdegree at least \(4\delta(G)-2\cdot 3=12-6=6\); or a cycle  \(C_4\).  Such a cycle a priori has outdegree at least \(4\delta(G)-2\cdot 4=4\); but such a small outdegree would only be possible if all vertices had valence \(3\), which does not occur in any \(C_4\) subgraph of \(G\), giving outdegree at least \(5\).  Finally, for \(\xi_5\), a connected subgraph on \(5\) vertices is either a tree with outdegree at least \(5\delta(G)-2\cdot 4=15-8=7\); or a cycle  \(C_4\) with an additional edge and leaf.  Compared to \(C_4\), the added edge removes \(1\) from the outdegree and adds \(2\), giving outdegree at least \(1+5=6\).  We conclude that \(e(\mathcal{S})=\min\{\xi_3(G),\xi_4(G),\xi_5(G)\}=5\).

This gives us that \(||\mathcal{S}||=\min\{5,5\}=5\), so \(\sn(G)\geq 5\).  We will later see that in fact \(\sn(G)=5\).
\end{example}

We now briefly recall divisor theory on graphs.  A \emph{divisor} \(D\) on a graph \(G\) is a \(\mathbb{Z}\)-linear combination of the vertices of \(G\):
\[D=\sum_{v\in V(G)}a_v\cdot (v),\,\,\,\,\,\,\,a_v\in\mathbb{Z}.\]
We denote the coefficient of \((v)\) by \(D(v)\); that is, \(D(v)=a_v\).
The set of all divisors \(\textrm{Div}(G)\) on a graph \(G\) forms a group, namely the free abelian group on the set \(V(G)\).  The \emph{degree} of a divisor is the integer sum of the coefficients:
\[\deg(D)=\sum_{v\in V(G)}a_v.\]We interpret a divisor on a graph \(G\) as a placement of integer numbers of poker chips on the vertices of the graph \(G\), with negative integers representing debt.  Under this interpretation the degree of a divisor is the total number of chips.

The \emph{chip-firing move at \(v\)} transforms a divisor \(D\) into a divisor \(D'\) by removing \(\val(v)\) chips from \(v\) and giving \(|E({v},{w})|\)  chips to every vertex \(w\neq v\):
\[D'=(a_v-\val(v))(v)+\sum_{w\in V(G)-\{v\}}(a_w+|E({v},{w})|)(w).\]
That is, \(v\) donates chips to its neighbors, one along each edge incident to \(v\).  We say two divisors \(D\) and \(E\) are \emph{equivalent}, written \(D\sim E\), if we can transform \(D\) into \(E\) via  a sequence of chip-firing moves.  Figure \ref{figure:chip_firing_example} illustrates a collection of divisors that are equivalent to one another, with each divisor obtained from the last by firing the circled vertex.  We remark that it does not matter what order vertices are fired, only which vertices are fired and how many times: at each vertex, the change in the number of chips is determined only by how many times it fires, and how many times its neighbors fire, keeping track of parallel edges. This allows us to consider \emph{subset firing moves}, in which all vertices in a subset \(S\) are ``simultaneously'' chip-fired.  The net effect of this is for a chip to be moved along every edge of \(E(S,S^C)\) from its vertex in \(S\) to its vertex in \(S^C\).  For instance, the rightmost divisor in Figure \ref{figure:chip_firing_example} can be obtained from the leftmost by simultaneously firing all the vertices except the right vertex. 

\begin{figure}[hbt]
    \centering
    \includegraphics{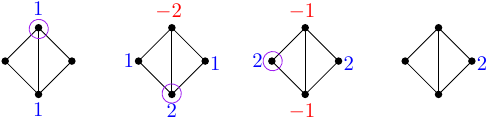}
    \caption{A collection of equivalent divisors; moving from left to right, the next divisor is obtained by chip-firing the circled vertex. Vertices with no integer labels have zero chips.}
    \label{figure:chip_firing_example}
\end{figure}

A divisor \(D\) is called \emph{effective} if \(D(v)\geq 0\) for all \(v\).  The \emph{rank} \(r(D)\) of a divisor \(D\) is defined as follows.  If \(D\) is not equivalent to any effective divisor, we set \(r(D)=-1\).  Otherwise, we let \(r(D)\) equal the maximum integer \(r\geq 0\) such that for any effective divisor \(E\) of degree \(r\), the divisor \(D-E\) is equivalent to some effective divisor.

The \emph{(divisorial) gonality} of a graph \(G\) is the minimum degree of a positive rank divisor.  Thus, if the gonality of a graph is \(k\), then there exists a divisor \(D\) of degree \(k\) such that, for any vertex \(q\in V(G)\), we can perform chip-firing moves to remove all debt; and there exists no such divisor of degree \(k-1\).

\begin{example}\label{example:herschel_gon}
Let \(G\) again be the Herschel graph, and consider the divisor \(D\) of degree \(5\) that places one chip on each of the five vertices of the smaller partite set \(A\), pictured on the left in Figure \ref{figure:herschel_chips}.  We claim that this divisor has positive rank.  To see this, consider any vertex \(q\in B\), the larger partite set.  Chip-firing the subset \(\{q\}^C\subset V(G)\) moves one chip from every neighbor of \(q\) to \(q\), and makes no other changes. This means that some vertices of \(A\) lose a chip to \(q\), going down to \(0\) chips; that some remain at \(1\) chip; that all vertices of \(B\) remain at \(0\) chips; that that \(q\) gains three chips, since it has three neighbors  in \(A\).  An example of this chip-firing process is illustrated in the middle and right of Figure \ref{figure:herschel_chips}. This strategy works to eliminate debt in any divisor of the form \(D-(q)\) where \(q\in B\); and any divisor of the form \(D-(q)\) where \(q\in A\) is already effective.  Thus \(r(D)\geq 1\), meaning \(\gon(G)\leq \deg(D)=5\).
\end{example}

Providing a lower bound of \(k\) on gonality often proves difficult, since ruling out the existence of \emph{any} divisor of degree \(k-1\) of positive rank is more involved than exhibiting a single divisor of degree \(k\) of positive rank.  Fortunately, the scramble number of a graph serves as a lower bound on gonality.

\begin{theorem}[Theorem 1.1 in \cite{scramble}]\label{theorem:sn_leq_gon}
For any graph \(G\), we have \(\sn(G)\leq \gon(G)\).
\end{theorem}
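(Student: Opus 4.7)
The plan is to prove the slightly more general statement: for any scramble \(\mathcal{S}\) on \(G\) and any divisor \(D\) of positive rank, \(||\mathcal{S}||\leq\deg(D)\); applying this with \(\deg(D)=\gon(G)\) yields the theorem.

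First, I would invoke the definition of rank: since \(r(D)\geq 1\), for every vertex \(v\in V(G)\) the \(v\)-reduced divisor \(D_v\) equivalent to \(D\) is effective with \(D_v(v)\geq 1\), and in particular \(|\supp(D_v)|\leq\deg(D_v)=\deg(D)\). I would then split into two cases. In the easy case, some \(\supp(D_v)\) meets every egg of \(\mathcal{S}\); this makes \(\supp(D_v)\) a hitting set of size at most \(\deg(D)\), so \(h(\mathcal{S})\leq \deg(D)\) and hence \(||\mathcal{S}||\leq\deg(D)\). The main work lies in the remaining case, when for every \(v\) there is an egg \(E_v\) disjoint from \(\supp(D_v)\); here I would construct an egg-cut of size at most \(\deg(D)\).

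For this construction I would fix a vertex \(v\) in some egg \(E_0\) and run Dhar's burning algorithm on \(D_v\) starting from \(v\). Because \(D_v\) is \(v\)-reduced, every vertex eventually burns, producing an increasing chain of connected burned subsets \(\{v\}=B_0\subsetneq B_1\subsetneq\cdots\subsetneq B_N=V\). The Dhar property guarantees that whenever \(u\notin B_t\), we have \(|E(\{u\},B_t)|\leq D_v(u)\). Choosing a stopping time \(t\) at which \(B_t\) both contains a full egg and has \(B_t^C\) containing a full egg, summing \(|E(\{u\},B_t)|\) over \(u\notin B_t\) bounds the outdegree of \(B_t\) in terms of \(\sum_{u\notin B_t}D_v(u)\leq\deg(D)\), modulo a correction accounting for the vertices just about to burn. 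Combined with Lemma \ref{lemma:min_with_connected}, this yields the desired egg-cut.

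The main obstacle will be identifying an appropriate stopping time: since the vertices of \(E_v\) (all having \(D_v\)-coefficient zero) burn as soon as a single neighbor burns, it is entirely possible for \(E_v\) to be partially or fully burned before \(E_0\) is, leaving no intermediate \(B_t\) that separates one whole egg from another. Overcoming this likely requires either a clever coordinated choice of \(v\) and of the pair of eggs, or equivalently passing to a minimal connected \(A\ni v\) that contains a whole egg and whose complement contains a whole egg, and then reading off the outdegree bound from the failure of Dhar's reduction when firing \(V\setminus A\) on \(D_v\). This minimal-set reformulation replaces the burning bookkeeping with a single inequality \(\sum_{u\notin A}|E(\{u\},A)|\leq \sum_{u\notin A}D_v(u)+\text{(small correction)}\), which I expect is the cleanest route to the \(\deg(D)\) bound.
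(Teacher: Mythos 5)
This paper does not actually prove Theorem \ref{theorem:sn_leq_gon}; it imports it from \cite{scramble}, so the comparison is with the proof given there. Your reduction to ``\(||\mathcal{S}||\leq\deg(D)\) for every scramble \(\mathcal{S}\) and every positive-rank divisor \(D\)'' and your first case (if some \(\supp(D_v)\) hits every egg, then \(h(\mathcal{S})\leq|\supp(D_v)|\leq\deg(D)\)) are correct and match the shape of the published argument. But the entire content of the theorem sits in your second case, and the mechanism you propose there does not close. You flag the stopping-time problem yourself, and it is real: because \(D_v\) is \(v\)-reduced the fire never stalls, and an egg disjoint from \(\supp(D_v)\) burns entirely the instant one of its vertices catches fire, so there need be no intermediate burnt set \(B_t\) with one whole egg inside and another whole egg outside. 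Worse, the inequality you fall back on at the end points the wrong way. \(v\)-reducedness says precisely that for every \(A\ni v\) with \(A\neq V\), firing \(A^C\) from \(D_v\) puts some vertex in debt, i.e.\ there exists \(u\notin A\) with \(|E(\{u\},A)|>D_v(u)\); that is a lower bound at a single vertex (it is what makes the fire spread), whereas to get \(\outdeg(A)\leq\sum_{u\notin A}D_v(u)\leq\deg(D)\) you need \(|E(\{u\},A)|\leq D_v(u)\) for \emph{all} \(u\notin A\) simultaneously --- exactly the condition reducedness forbids. Your ``small correction'' is the contribution of the about-to-burn vertices, and for an egg carrying no chips of \(D_v\) that correction is the entire quantity you are trying to bound.

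The argument in \cite{scramble} does not run Dhar's algorithm on a reduced divisor at this step. Roughly, it takes an effective \(D'\sim D\) whose support misses some egg \(E\) and an effective \(D''\sim D\) with a chip on a chosen vertex of \(E\), and decomposes the passage from \(D'\) to \(D''\) into firings of nested level sets of the function relating them, using the standard lemma that all intermediate divisors in such a chain are effective. Effectiveness of the divisor just before the first firing that pushes a chip into \(E\) forces each vertex of the fired set \(A\) to hold at least \(\outdeg_A(u)\) chips, which is what yields \(|E(A,A^C)|\leq\deg(D)\); connectedness of eggs together with effectiveness forces each egg entirely to one side of \(A\). The genuinely delicate remaining step --- guaranteeing an egg inside \(A\) as well as inside \(A^C\), so that \((A,A^C)\) really is an egg-cut --- requires a careful extremal choice of the data, and is exactly what your sketch leaves unresolved. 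So the skeleton is right, but the load-bearing step is missing and the substitute inequality is not available.
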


 Combining Examples \ref{example:herschel_sn} and \ref{example:herschel_gon} with this theorem, we have
\[5\leq \sn(G)\leq \gon(G)\leq 5,\]
meaning that \(\sn(G)=\gon(G)=5\) for the Herschel graph.

\begin{figure}[hbt]
    \centering
    \includegraphics[scale=0.7]{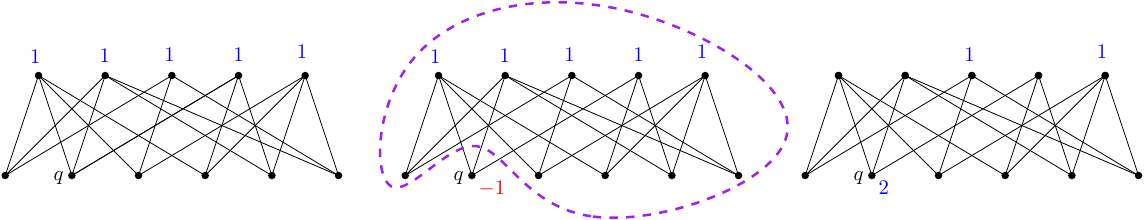}
    \caption{A divisor of degree \(5\), and a chip-firing move to eliminate debt on a vertex \(q\).}
    \label{figure:herschel_chips}
\end{figure}

A useful upper bound on gonality comes in the form of {strong separators}.  A \emph{strong separator} on a graph \(G\) is a non-empty vertex set \(S\subset V(G)\) such that each component \(C\) in \(G-S\) is a tree and every \(v\in S\) is incident to at most one edge with the other endpoint in \(C\).

\begin{lemma}[Theorem 3.33 in \cite{db}]\label{lemma:strong_separator}  If \(S\subset V(G)\) is a strong separator, then \(\gon(G)\leq |S|\).
\end{lemma}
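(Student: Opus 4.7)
The plan is to establish $\gon(G) \leq |S|$ by exhibiting a divisor of degree $|S|$ with positive rank. We take $D = \sum_{v \in S}(v)$, and it suffices to show that for every $q \in V(G)$, $D$ is equivalent to an effective divisor placing at least one chip on $q$. The case $q \in S$ is immediate. Otherwise, let $C$ be the unique tree component of $G - S$ containing $q$, write $T \subseteq S$ for the set of $S$-vertices adjacent to $C$, and for each $v \in T$ let $w_v \in C$ denote its unique $C$-neighbor (unique by the strong separator hypothesis).

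My first step is to fire the subset $V(G) \setminus C$. Each $v \in S$ has at most one edge into $C$, so $v$ loses at most the single chip it starts with; the resulting divisor $D_1$ is therefore effective, with $D_1(w) = |E(\{w\}, S)|$ for each $w \in C$ and $D_1$ vanishing on the other components of $G - S$. Since $G$ is connected, $C$ must have some $S$-edge, so $D_1$ places at least one chip in $C$. If $q$ has an $S$-neighbor, then $D_1(q) \geq 1$ and we are done.

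Otherwise we must transport a chip through $C$ to $q$. Root $C$ at $q$, and let $C^* \subseteq C$ consist of those $w \in C$ whose rooted subtree contains at least one $S$-incident vertex; since $C$ itself is $S$-incident and $q$ is not, $C^*$ is a subtree of $C$ containing $q$ together with at least one of its children. Define a firing function $f : V(G) \to \mathbb{Z}_{\geq 0}$ by setting $f(w) = d_C(w,q)$ on $C^*$; setting $f(w)$ equal to the depth of its closest $C^*$-ancestor for $w$ in a dead subtree of $C$; setting $f(v) = d_C(w_v, q) + 1$ for each $v \in T$; and extending to $S \setminus T$ and to other components of $G - S$ so that each $v \in T$ sends a chip only along its edge to $w_v$. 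A direct Laplacian computation then shows that $D - Lf$ is effective everywhere, and that $(D - Lf)(q)$ equals the number of active children of $q$ in $C$, which is at least one.

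The main obstacle is defining $f$ consistently across $S$-vertices and across other components of $G - S$ so that no $v \in S$ is asked to fire more chips than its single available chip. The strong separator condition --- at most one edge from each $v \in S$ into each component of $G - S$ --- is precisely what makes this bookkeeping go through: without it, a single chip on some $v \in S$ could be demanded simultaneously by multiple cross-component firings, and the construction of $f$ would fail to yield an effective divisor.
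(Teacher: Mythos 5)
The paper does not actually prove this lemma; it is quoted from \cite{db}, so there is no internal argument to compare against. Your overall strategy---take \(D=\sum_{v\in S}(v)\) and transport a chip to \(q\) through its tree component via a single firing function---is the standard one, and your handling of the tree \(C\) itself (the dead-subtree adjustment so that leaves with no \(S\)-neighbor do not go into debt) is careful and correct. The gap is exactly where you flag it, and it is not closed by the strong separator hypothesis. That hypothesis bounds the number of edges from a vertex of \(S\) into a \emph{component of \(G-S\)}; it says nothing about edges inside \(S\). Your values on \(T\) are forced: to have \(v\) send exactly one chip to \(w_v\) you must set \(f(v)=d_C(w_v,q)+1\), so two vertices of \(T\) attached at different depths of \(C\) receive different \(f\)-values, and if they happen to be adjacent, chips flow between them with nothing available to compensate. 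Concretely, let \(V(G)=\{q,x_1,x_2,v_1,v_2\}\) with edges \(qx_1\), \(x_1x_2\), \(x_1v_1\), \(x_2v_2\), \(v_1v_2\), and \(S=\{v_1,v_2\}\). Then \(S\) is a strong separator (here \(G-S\) is the path on \(q,x_1,x_2\) and each \(v_i\) has exactly one edge into it), and your recipe forces \(f(q)=0\), \(f(x_1)=1\), \(f(x_2)=2\), \(f(v_1)=2\), \(f(v_2)=3\). The vertex \(v_2\) then sends one chip to \(x_2\) and one chip to \(v_1\), ending at \(-1\); the resulting divisor is not effective, and there is no freedom left in \(f\) to repair this, since \(f\) is pinned on \(C\cup T\) and \(v_2\) has no further neighbors.

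The fix requires abandoning the tree-distance function on \(C\). In the example above the correct choice is \(f(v_1)=f(v_2)=2\), \(f(x_1)=f(x_2)=1\), \(f(q)=0\) (fire \(S\), then fire \(V(G)\setminus\{q\}\)): the chip reaching \(q\) enters \(C\) at \(x_1\) rather than being relayed from \(x_2\), so \(x_2\) need not sit at level \(2\). In general one wants \(f\) constant on \(V(G)\setminus C\) (so that no chips move among \(S\) and the other components) and, on \(C\), a function reflecting where chips actually enter the tree; this is most cleanly organized by induction on the distance from \(q\) to the nearest chip in \(C\), or by appealing to \(q\)-reduced divisors and Dhar's burning algorithm, rather than by the closed-form distance-based \(f\) you propose.
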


A special case is when \(G\) is a simple graph and \(S\) is the complement of an independent set.  It follows that \(\gon(G)\leq n-\alpha(G)\) for simple graphs, a result also proved in \cite[Proposition 3.1]{gonality_of_random_graphs}.

\section{The \(k\)-uniform scramble}
\label{section:k_uniform}

For a graph \(G\) and an integer \(k\geq 1\), let \(\mathcal{E}_k\) denote the scramble on \(G\) whose eggs are precisely the connected subgraphs on \(k\) vertices.  For \(k=1\), this scramble has each vertex as an egg; this scramble has order \(\min\{|V(G)|,\lambda(G)\}\).  For \(k=2\), the eggs are precisely the sets of the form \(\{u,v\}\) where \(G\) has an edge connecting \(u\) and \(v\); this is the \emph{edge-scramble} defined in \cite[\S 3]{echavarria2021scramble}.

\begin{lemma}
We have \(e(\mathcal{E}_k)=\lambda_k(G)\).
\end{lemma}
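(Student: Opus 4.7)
The plan is to prove $e(\mathcal{E}_k) = \lambda_k(G)$ by showing each quantity is bounded above by the other, treating the infinite case separately. The key elementary fact I would use throughout is that any connected subgraph on at least $k$ vertices contains a connected subset of exactly $k$ vertices: pick a spanning tree and iteratively remove leaves until exactly $k$ vertices remain. So any connected set $A\subset V(G)$ with $|A|\geq k$ houses an egg of $\mathcal{E}_k$.

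First I would show $\lambda_k(G)\leq e(\mathcal{E}_k)$. By Lemma~\ref{lemma:min_with_connected}, we may choose a minimum-size egg-cut $(A,A^C)$ with both $A$ and $A^C$ connected. Since each side contains an egg of $\mathcal{E}_k$, both $|A|\geq k$ and $|A^C|\geq k$. Setting $T=E(A,A^C)$, the graph $G-T$ has exactly two components, $G[A]$ and $G[A^C]$, each of order at least $k$. Thus $T$ is a valid cut witnessing $\lambda_k(G)$, so $\lambda_k(G)\leq |T|=e(\mathcal{E}_k)$.

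Next I would show $e(\mathcal{E}_k)\leq \lambda_k(G)$. Take a minimum set $T\subset E(G)$ realizing $\lambda_k(G)$, and let $C_1,\ldots,C_m$ be the connected components of $G-T$, each with $|V(C_i)|\geq k$. Set $A=V(C_1)$ and $A^C=V(C_2)\cup\cdots\cup V(C_m)$. The component $C_1$ contains a connected $k$-subset, and so does $C_2\subseteq A^C$, so $(A,A^C)$ is an egg-cut of $\mathcal{E}_k$. Every edge in $E(A,A^C)$ was necessarily removed (since the two sides lie in different components of $G-T$), so $E(A,A^C)\subseteq T$ and $e(\mathcal{E}_k)\leq |E(A,A^C)|\leq |T|=\lambda_k(G)$.

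Finally I would address the infinite case: if $\lambda_k(G)=\infty$, then no edge set disconnects $G$ into components of order $\geq k$, which forces $|V(G)|<2k$ or otherwise prevents any egg-cut from existing (since any egg-cut yields, via the first argument, a valid $\lambda_k$-cut), and we set both quantities to $\infty$ by convention. Conversely, if $e(\mathcal{E}_k)=\infty$ (no egg-cut exists), the second argument shows $\lambda_k(G)$ cannot be finite either. I do not foresee any real obstacle here; the only mildly delicate point is ensuring that the component $G[A]$ and $G[A^C]$ produced from a connected egg-cut really are the full components of $G-T$, which is immediate from connectedness of $A$ and $A^C$.
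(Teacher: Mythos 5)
Your proposal is correct and follows essentially the same route as the paper: one direction uses Lemma~\ref{lemma:min_with_connected} to make both sides of a minimum egg-cut connected and takes $T=E(A,A^C)$, and the other direction builds an egg-cut from a component of $G-T$. The only addition is that you make explicit (via leaf-pruning a spanning tree) why a component on at least $k$ vertices contains an egg, a fact the paper uses implicitly.
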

\begin{proof}
We will show there exists an egg-cut of size at most \(\ell\) if and only if there exists a set \(T\subset E(G)\) of size at most \(\ell\) with \(G-T\) disconnected and each component of \(G-T\) containing at least $k$ vertices.  First, if there exists an egg-cut \((A,A^C)\) of size at most \(\ell\), there exists another egg-cut \((A'',A''^C)\) of size at most \(\ell\) with \(A''\) and \(A''^C\) both connected by Lemma \ref{lemma:min_with_connected}.  Then choosing \(T=E(A'',A''^C)\) (with size at most \(\ell\)) implies that \(G-T\) is disconnected with two connected components, \(A''\) and \(A''^C\); and since each has an egg, each component has at least \(k\) vertices, meaning the claimed \(T\) exists.  Conversely, if such a \(T\) exists with size at most \(\ell\), then let \(A=V(H)\) for a connected component \(H\) of \(G-T\).  Since \(|A|\geq k\), it contains an egg of \(\mathcal{E}_k\), as odes \(A^C\). Thus \(T\) is an egg-cut of size at most \(\ell\).
\end{proof}

\begin{lemma}
We have \(h(\mathcal{E}_k)=n-\alpha_{k-1}^c(G)\).
\end{lemma}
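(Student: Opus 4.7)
The plan is to show that hitting sets for $\mathcal{E}_k$ are precisely the complements of $(k-1)$-component independent sets, from which the equality follows immediately by taking minimum cardinality on one side and maximum on the other.

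First I would unpack the definition: a set $C \subset V(G)$ is a hitting set for $\mathcal{E}_k$ if and only if $C$ intersects every connected subset of $V(G)$ of size exactly $k$. Equivalently, $V(G) \setminus C$ contains no connected subset of size $k$, i.e.\ the induced subgraph $G[V(G)\setminus C]$ has no connected subgraph on $k$ vertices. The goal is to recognize this condition as saying every connected component of $G[V(G)\setminus C]$ has order at most $k-1$, which is exactly the defining condition for $V(G)\setminus C$ to be a $(k-1)$-component independent set.

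The one nontrivial step is the implication that if some component $H$ of $G[V(G)\setminus C]$ has at least $k$ vertices, then $V(G)\setminus C$ actually contains a connected subset of exactly $k$ vertices (and hence an egg of $\mathcal{E}_k$ disjoint from $C$). This follows from the standard fact that any connected graph on at least $k$ vertices contains a connected induced subgraph on exactly $k$ vertices: take a spanning tree of $H$ and repeatedly remove a leaf until $k$ vertices remain, at each step preserving connectedness. The reverse direction is direct: if every component of $G[V(G)\setminus C]$ has at most $k-1$ vertices, then no $k$-vertex connected subset can avoid $C$, so $C$ is a hitting set.

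Putting the two directions together gives a bijection between hitting sets and $(k-1)$-component independent sets via complementation, so
\[h(\mathcal{E}_k)=\min_{C\text{ hitting}}|C|=\min_{S\,(k-1)\text{-comp.\ ind.}}(n-|S|)=n-\alpha^c_{k-1}(G),\]
completing the proof. I do not anticipate a real obstacle here; the only thing to be careful about is the leaf-pruning argument, which must be stated explicitly because the scramble requires eggs of size \emph{exactly} $k$ rather than at least $k$.
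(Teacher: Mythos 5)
Your proposal is correct and follows essentially the same route as the paper: both identify hitting sets of $\mathcal{E}_k$ with complements of $(k-1)$-component independent sets and then pass to the extremal cardinalities. The only difference is that you explicitly justify, via leaf-pruning a spanning tree, that a component on at least $k$ vertices contains a connected subset of exactly $k$ vertices, a point the paper leaves implicit.
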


\begin{proof}
Let \(S\) be a minimum hitting set for \(\mathcal{E}_k\).  Since \(S\) must hit every connected subgraph on \(k\) vertices, we know that \(S^C\) must be a \((k-1)\)-component independent set (any larger component would contain an unhit egg).  This means \(|S^C|\leq \alpha_{k-1}^c(G)\), and so \(h(\mathcal{E}_k) =|S|\geq n-\alpha_{k-1}^c(G)\).  On the other hand, there does exist a hitting set of size \(n-\alpha_{k-1}^c(G)\), namely the complement of any \((k-1)\)-component independent set.  Thus \(h(\mathcal{E}_k)=n-\alpha_{k-1}^c(G)\).
\end{proof}

These lemmas give us the following theorem.

\begin{theorem}\label{theorem:order_ek}
The order of \(\mathcal{E}_k\) is \(\min\{\lambda_k(G),n-\alpha_{k-1}^c(G)\}.\)
\end{theorem}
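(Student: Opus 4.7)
The plan is to observe that this theorem is essentially an immediate corollary of the two preceding lemmas, combined with the definition of the order of a scramble. Recall that by definition
\[\|\mathcal{E}_k\| = \min\bigl(h(\mathcal{E}_k),\, e(\mathcal{E}_k)\bigr).\]
The preceding two lemmas compute each of these two quantities individually: we have $e(\mathcal{E}_k) = \lambda_k(G)$ and $h(\mathcal{E}_k) = n - \alpha_{k-1}^c(G)$. Substituting these two equalities into the definition yields exactly $\|\mathcal{E}_k\| = \min(\lambda_k(G),\, n - \alpha_{k-1}^c(G))$, as claimed.

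Thus the proof is essentially a single line. The only thing to verify before invoking the definition is that $\mathcal{E}_k$ is indeed a well-defined scramble, i.e.\ that it is a nonempty collection of connected subsets, so that $h(\mathcal{E}_k)$ and $e(\mathcal{E}_k)$ are meaningful. Since $G$ is assumed connected and contains at least $k$ vertices in the cases of interest, the collection of connected vertex subsets of size exactly $k$ is nonempty, so both quantities are well-defined (with the convention $\lambda_k(G) = \infty$ if no disconnecting set with all components of size $\geq k$ exists, which is consistent with $e(\mathcal{E}_k)$ being undefined when no egg-cut exists).

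There is no real obstacle here, as all the substantive content has been handled in the preceding two lemmas. The theorem is stated separately simply to package the two invariant computations into a single convenient form that will be used repeatedly in the sequel, most notably in establishing Theorem~\ref{theorem:main}.
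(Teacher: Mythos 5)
Your proposal is correct and matches the paper exactly: the paper gives no separate proof, simply noting that the theorem follows immediately from the two preceding lemmas computing \(e(\mathcal{E}_k)=\lambda_k(G)\) and \(h(\mathcal{E}_k)=n-\alpha_{k-1}^c(G)\), together with the definition \(\|\mathcal{E}_k\|=\min(h(\mathcal{E}_k),e(\mathcal{E}_k))\). Your additional remark about well-definedness is a harmless bonus not present in the paper.
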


When \(k=1\), we get the vertex scramble:
\[||\mathcal{E}_1||=\min\{\lambda_1(G),n-\alpha_0(G)\}=\min\{\lambda(G),|V(G)|\}.\]
When \(k=2\), we get the edge scramble:
\[||\mathcal{E}_2||=\min\{\lambda_2(G),n-\alpha_1^c(G)\}=\min\{\lambda_2(G),n-\alpha(G)\}.\]
When \(k=3\), we have
\[||\mathcal{E}_3||=\min\{\lambda_3(G),n-\alpha_2^c(G)\}=\min\{\lambda_3(G),n-\textrm{diss}(G)\}.\]

\begin{proof}[Proof of Theorem \ref{theorem:main}]
Let \(S\) be an \((\ell-2)\)-component independent set of maximum size on \(G\).  We claim that \(S^C\) is a strong separator on \(G\).  To see this, note that \(G-S^C=G[S]\) consists of components with at most \(\ell-2\) vertices, which since the girth of \(G\) is at least \(\ell\) must all be trees.  And if a vertex \(v\in S^C\) were incident to two vertices \(u\) and \(w\) in a component \(C\) of \(G-S^C\), then we have a cycle \(v-u-\ldots-w-v\), where the middle portion of the cycle is the unique path from \(u\) to \(w\) in \(C\); this cycle has at most \(|C\cup\{v\}|\leq \ell-1\) vertices in it, contradicting the girth being at least \(\ell\).  Thus every vertex \(v\in S^C\) is incident to at most one vertex in each component \(C\) of \(G-S^C\), and we have that \(S^C\) is a strong separator.  It follows from Lemma \ref{lemma:strong_separator} that \(\gon(G)\leq |S^C|=n-|S|=n-\alpha_{\ell-2}^c(G)\).

Now assume further that we have \(\lambda_{\ell-1}(G)\geq n-\alpha_{\ell-2}^c(G)\).  The order of the scramble \(\mathcal{E}_{\ell-1}\) lower bounds \(\sn(G)\), which in turn lower bounds \(\gon(G)\) by Theorem \ref{theorem:sn_leq_gon}:
\[||\mathcal{E}_{\ell-1}||=\min\{n-\alpha^c_{\ell-2}(G),\lambda_{\ell-1}(G)\}=n-\alpha^c_{\ell-2}(G)\leq \sn(G)\leq \gon(G)\leq n-\alpha^c_{\ell-2}(G).\]
Since \(\sn(G)\) and \(\gon(G)\) are both upper and lower bounded by \(n-\alpha^c_{\ell-2}(G)\), we have that all three are equal.
\end{proof}

Our first application of this result is to improve on \cite[Corollary 3.2]{echavarria2021scramble}, which states that when \(G\) is a simple graph on \(n\) vertices with \(\delta(G)\geq \lfloor n/2\rfloor +1\), we have \(\sn(G)=\gon(G)=n-\alpha(G)\).

\begin{theorem}\label{theorem:girth3}
Let \(G\) be a simple graph on \(n\) vertices such that for any two adjacent vertices \(u\) and \(v\) we have  \(\val(u)+\val(v)\geq n\), and for any two non-adjacent vertices \(u\) and \(v\) we have \(\val(u)+\val(v)\geq n+1\).  Then we have \(\sn(G)=\gon(G)=n-\alpha(G)\).
\end{theorem}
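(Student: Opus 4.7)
The plan is to apply Theorem \ref{theorem:main} with $\ell = 3$, which will give $\sn(G) = \gon(G) = n - \alpha_1^c(G) = n - \alpha(G)$. Since $G$ is simple its girth is at least $3$, so the girth hypothesis is automatic and all the work goes into verifying the edge-connectivity condition $\lambda_2(G) \geq n - \alpha(G)$.

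First I would split on whether $G$ is $\lambda_2$-connected. If not, then by our convention $\lambda_2(G) = \infty$ and the inequality is immediate. Otherwise, the hypothesis $\val(u) + \val(v) \geq n+1$ on nonadjacent pairs is precisely the assumption of Lemma \ref{lemma:wang_li}, which therefore gives $\lambda_2(G) = \xi_2(G)$. Since every connected subgraph on two vertices is an edge $uv$, and such an edge has outdegree $\val(u) + \val(v) - 2$, the adjacent degree sum hypothesis yields $\xi_2(G) \geq n - 2$, and hence $\lambda_2(G) \geq n - 2$.

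This bound is enough whenever $\alpha(G) \geq 2$, which holds unless $G = K_n$. The main (minor) obstacle is to handle the complete graph, since the generic bound $\xi_2 \geq n - 2$ does not beat $n - \alpha(K_n) = n - 1$. Fortunately, for $K_n$ the nonadjacent degree sum hypothesis is vacuous, so Lemma \ref{lemma:wang_li} still applies whenever $K_n$ is $\lambda_2$-connected (automatic for $n \geq 4$), and a direct computation gives $\xi_2(K_n) = 2(n - 2) \geq n - 1$; for $n \leq 3$ the graph $K_n$ is not $\lambda_2$-connected and falls into the first case. In every case $\lambda_2(G) \geq n - \alpha(G)$, and Theorem \ref{theorem:main} delivers the desired equalities.
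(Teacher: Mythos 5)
Your proof is correct and follows essentially the same route as the paper: apply Theorem \ref{theorem:main} with $\ell=3$, use Lemma \ref{lemma:wang_li} to get $\lambda_2(G)=\xi_2(G)$, and bound $\xi_2(G)\geq n-2$ from the adjacent degree-sum hypothesis. The only (harmless) differences are that you handle $K_n$ by computing $\xi_2(K_n)=2(n-2)$ within the same framework, whereas the paper dispatches it up front by citing $\sn(K_n)=\gon(K_n)=n-1$, and you are slightly more careful than the paper in explicitly treating the case $\lambda_2(G)=\infty$.
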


Note that the assumption \(\delta(G)\geq \lfloor n/2\rfloor +1\) is very slightly more restrictive than our valence assumptions (which allow for one vertex of valence less than \(\lfloor n/2\rfloor +1\)), meaning our result can be viewed as a generalization.

\begin{proof}
First note that if \(G=K_n\), then \(\sn(G)=\gon(G)=n-1=n-\alpha(G)\), and our result holds.  Assume for the rest of the proof that \(G\) is not complete, so \(\alpha(G)\geq 2\).

Since \(G\) is simple, its girth is at least \(3\).  By Theorem \ref{theorem:main} with \(\ell=3\), to show our desired result it suffices to show that \(\lambda_2(G)\geq n-\alpha_1^c(G)=n-\alpha(G)\). Since \(\val(u)+\val(v)\geq n+1\) for non-adjacent \(u\) and \(v\), we know by Lemma \ref{lemma:wang_li} that \(G\) is \(\lambda_2\)-optimal, so we have \(\lambda_2(G)=\xi_2(G)\).  If \(S=\{u,v\}\) with \(\overline{uv}\in E(G)\), we have \(\outdeg(S)=\val(u)+\val(v)-2\geq n-2\geq n-\alpha(G)\) since \(\alpha(G)\geq 2\).   
\end{proof}

Our next two results consider graphs with girth at least \(4\).  The first almost entirely encompasses the second, except the second covers several cases when \(\xi_3(G)<n+1\), such as when three vertices in a path each have valence exactly \(n/3+1\) (giving the path an outdegree of \(n+3-4=n-1\)).

\begin{corollary}\label{theorem:girth4_1}
Let \(G\) be a triangle-free graph with \(\delta(G)\geq3\) and \(\xi_3(G)\geq n+1\).  Then we have
\[\sn(G)=\gon(G)=n-\alpha_2^c(G).\]
\end{corollary}

\begin{proof}
To apply Theorem \ref{theorem:main} with \(\ell=4\), it suffices it suffices to show that \(\lambda_3(G)\geq n-\alpha_2^c(G) \).  By Lemma \ref{lemma:mv}, we know that \(G\) is \(\lambda_3\)-optimal, so \(\lambda_3(G)=\xi_3(G)\geq n+1>n-\alpha_2^c(G)\), as desired.
\end{proof}

\begin{corollary}\label{theorem:girth4_2}
Let \(G\) be a triangle-free graph on \(6\) or more vertices with \(\delta(G)\geq n/3+1\).  Then we have
\[\sn(G)=\gon(G)=n-\alpha_2^c(G).\]
\end{corollary}

\begin{proof}
To apply Theorem \ref{theorem:main} with \(\ell=4\), it suffices to show that \(\lambda_3(G)\geq n-\alpha_2^c(G) \).  Since \(\delta(G)\geq n/3+1\geq \frac{1}{2}(\lfloor n/2\rfloor +3)\), we may apply Lemma \ref{lemma:hmm} with \(k=3\) to deduce that \(G\) is \(\lambda_3\)-optimal, so \(\lambda_3(G)=\xi_3(G)\).  If \(S\) is a connected subgraph on \(3\) vertices, it must be a path on \(3\) vertices since \(G\) is triangle-free, so it has two edges interior to it.  This means that the outdegree of \(S\) is at least \(3\delta(G)-2\cdot 2\geq n+3-4=n-1\), and we have \(\lambda_3(G)\geq n-1\geq n-\alpha_2^c(G)\), as desired.  
\end{proof}

Our last result in this vein is on graphs of girth at least \(5\).

\begin{theorem}\label{theorem:girth5}
Let \(G\) be a graph of girth at least \(5\) on \(8\) or more vertices with \(\delta(G)\geq \frac{1}{2}\left(\lfloor n/2\rfloor+4\right)\).  Then we have
\[\sn(G)=\gon(G)=n-\alpha_3^c(G).\]
\end{theorem}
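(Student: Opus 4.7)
The plan is to mirror the approach taken in the proofs of Theorems \ref{theorem:girth4_1} and \ref{theorem:girth4_2}: invoke Theorem \ref{theorem:main} with $\ell=5$, which will reduce the problem to verifying the single inequality $\lambda_4(G) \geq n - \alpha_3^c(G)$. Since the girth hypothesis is built into Theorem \ref{theorem:main}, the entire task becomes establishing this lower bound on $\lambda_4(G)$.

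First I would apply Lemma \ref{lemma:hmm} with $k=4$. The hypothesis $n\geq 8 = 2k$ is given, the triangle-free hypothesis is implied by girth at least $5$, and the degree condition $\delta(G) \geq \tfrac{1}{2}(\lfloor n/2 \rfloor + 4)$ is precisely what Lemma \ref{lemma:hmm} requires for $k=4$. So $G$ is $\lambda_4$-optimal, giving $\lambda_4(G) = \xi_4(G)$.

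Next I would bound $\xi_4(G)$ by examining the possible structure of a connected subgraph $S$ on $4$ vertices. Because the girth of $G$ is at least $5$, $G[S]$ cannot contain a $C_3$ or a $C_4$, and so (being connected on $4$ vertices and cycle-free) $G[S]$ must be a tree, hence has exactly $3$ interior edges. The outdegree of $S$ therefore satisfies
\[
\outdeg(S) \;\geq\; 4\delta(G) - 2\cdot 3 \;\geq\; 2\bigl(\lfloor n/2\rfloor + 4\bigr) - 6 \;\geq\; (n-1) + 8 - 6 \;=\; n+1.
\]
Since $\alpha_3^c(G) \geq 0$, we conclude $\lambda_4(G) = \xi_4(G) \geq n+1 \geq n - \alpha_3^c(G)$, which is exactly what is needed to apply Theorem \ref{theorem:main} with $\ell = 5$ and obtain $\sn(G) = \gon(G) = n - \alpha_3^c(G)$.

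There is no real obstacle: every ingredient is in place, and the only mild subtlety is recognizing that girth at least $5$ eliminates both $C_3$ and $C_4$ subgraphs on $4$ vertices, forcing $G[S]$ to be a tree (either $P_4$ or $K_{1,3}$) with exactly $3$ edges, which is what makes the bound $\xi_4(G) \geq 4\delta(G) - 6$ valid.
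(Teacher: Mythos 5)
Your proposal is correct and follows essentially the same route as the paper's proof: reduce to $\lambda_4(G)\geq n-\alpha_3^c(G)$ via Theorem \ref{theorem:main} with $\ell=5$, invoke Lemma \ref{lemma:hmm} with $k=4$ to get $\lambda_4(G)=\xi_4(G)$, and bound $\xi_4(G)$ by observing that girth at least $5$ forces any connected $4$-vertex subgraph to be a tree with $3$ edges. Your arithmetic ($2\lfloor n/2\rfloor+2\geq n+1$) is a touch cleaner than the paper's, and you explicitly verify the hypotheses of Lemma \ref{lemma:hmm}, but the argument is the same.
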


\begin{proof}
To apply Theorem \ref{theorem:main} with \(\ell=5\), it suffices to show that \(\lambda_4(G)\geq n-\alpha_3^c(G)\).  By Lemma \ref{lemma:hmm} with \(k=4\), we know that \(G\) is \(\lambda_4\)-optimal, so \(\lambda_4(G)=\xi_4(G)\).  Let \(H\) be a connected subgraph of \(G\) on \(4\) vertices.  Since the girth of \(G\) is at least \(5\), \(G\) must be a tree, and so has \(3\) edges.  This means that the outdegree of \(H\) is at least \(4\delta(G)-2\cdot 3\geq 2\lfloor n/2\rfloor +8-6=2\lfloor n/2\rfloor+2> 2(n/2-1)+2=n>n-\alpha_2^c(G)\), as desired.
\end{proof}

For girth \(6\) or more, it is difficult to improve on these results, since general \(\lambda_k\)-optimality results require a minimum degree of approximately \(n/4\).


\subsection{Bipartite graphs with high valence}

Let \(G\) be a simple bipartite graph on \(n\) vertices with partite sets \(A\) and \(B\), with \(|A|=n_1\) and \(|B|=n_2\).  Since \(A\) and \(B\) are both independent sets, we have \(\alpha(G)\geq \max\{n_1,n_2\}\), implying that
\[\gon(G)\leq n-\max\{n_1,n_2\}=\min\{n_1,n_2\}.\]
 Sometimes we have equality; for instance, the gonality of the complete bipartite graph \(K_{n_1,n_2}\) is \(\min\{n_1,n_2\}\) by \cite[Example 4.3]{debruyn2014treewidth}.
A natural question to ask is then:  for which bipartite graphs $G$ does it hold that $\gon{(G)} = \min\{n_1,n_2\}$? Our first theorem in this subsection shows a sufficient assumption on \(\delta(G)\) to guarantee this equality.  In order to prove it, we present the following lemma.

\begin{lemma}\label{lemma:bipartite_independence}
Let \(G\) be a simple bipartite graph with partite sets \(A\) and \(B\), where \(|A|=n_1\) and \(|B|=n_2\) with \(n=n_1+n_2\).  Assume further that \(\delta(G)\geq n/4\). Then \(\alpha(G)=\max\{n_1,n_2\}\).
\end{lemma}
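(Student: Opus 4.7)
The plan is to show the nontrivial direction $\alpha(G)\leq \max\{n_1,n_2\}$, since the reverse is immediate from the fact that both \(A\) and \(B\) are independent. Without loss of generality, assume \(n_1\geq n_2\), so \(n_1\geq n/2\), and suppose for contradiction that \(I\subset V(G)\) is an independent set with \(|I|>n_1\). Let \(a=|I\cap A|\) and \(b=|I\cap B|\). If either \(a=0\) or \(b=0\), then \(I\) lies entirely in one partite set, so \(|I|\leq \max\{n_1,n_2\}=n_1\), contradicting \(|I|>n_1\). Thus we may assume \(a,b\geq 1\).

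The key step is then to exploit the minimum valence condition on each side. Pick any vertex \(v\in I\cap A\). Since \(G\) is bipartite, every neighbor of \(v\) lies in \(B\); since \(I\) is independent, no neighbor of \(v\) lies in \(I\cap B\). Therefore the neighbors of \(v\) all lie in \(B\setminus(I\cap B)\), a set of size \(n_2-b\), giving
\[
n/4\leq \val(v)\leq n_2-b.
\]
By the symmetric argument applied to any vertex of \(I\cap B\), we also obtain \(n/4\leq n_1-a\). Adding these two inequalities yields
\[
a+b\leq (n_1-n/4)+(n_2-n/4)=n-n/2=n/2\leq n_1,
\]
contradicting \(|I|=a+b>n_1\). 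Hence no such \(I\) exists, so \(\alpha(G)\leq n_1=\max\{n_1,n_2\}\), completing the proof.

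There is no real obstacle here: the argument is a clean double counting of edges across the bipartition, using only the minimum degree hypothesis and the definition of independence. The only thing to be slightly careful about is the degenerate cases \(a=0\) or \(b=0\), where the neighbor count inequalities are vacuous and must be handled separately, and the fact that \(n_1\geq n/2\) follows from the WLOG assumption \(n_1\geq n_2\), which is what ultimately produces the contradiction with \(a+b\leq n/2\).
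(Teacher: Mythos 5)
Your proof is correct and follows essentially the same approach as the paper's: once the independent set meets both partite sets, the minimum degree condition forces at least $n/2$ vertices outside it, giving the bound $|I|\leq n/2\leq\max\{n_1,n_2\}$. The paper phrases this as counting the disjoint neighborhoods of one vertex from each side inside the complement, whereas you split the count per partite set and add the two inequalities, but the underlying argument is identical.
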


\begin{proof}
Certainly \(\alpha(G)\geq \max\{n_1,n_2\}\), since each partite set forms an independent set.  Suppose \(S\) is an independent set with vertices \(u\) and \(v\) in each of the two partite sets.  Then \(S\) can contain no neighbors of \(u\) or \(v\).  Since \(u\) and \(v\) are not neighbors and have no common neighbors, this gives us at least \(2\delta(G)\geq n/2\) vertices in \(S^C\).  This means that \(|S|=n-|S^C|\leq n-n/2= n/2\leq \max\{n_1,n_2\}\).  Thus no independent set can be larger than \(\max\{n_1,n_2\}\), and we have the desired result.
\end{proof}

\begin{theorem}\label{theorem:bipartite1}
Let \(G\) be a simple bipartite graph on \(n_1\) and \(n_2\) vertices, with \(n=n_1+n_2\geq 4\), such that \(\delta(G)\geq \frac{1}{2}(\lfloor n/2\rfloor +2)\).  Then \(\sn(G)=\gon(G)=\min\{n_1,n_2\}\).
\end{theorem}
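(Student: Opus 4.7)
The plan is to apply Theorem \ref{theorem:main} with $\ell = 4$; this is permissible because $G$ is bipartite, hence triangle-free and of girth at least $4$. Two things then need to be checked: that $n - \alpha_2^c(G)$ coincides with $\min\{n_1, n_2\}$, and that $\lambda_3(G) \geq n - \alpha_2^c(G)$. Once both are in place, Theorem \ref{theorem:main} produces $\sn(G) = \gon(G) = n - \alpha_2^c(G) = \min\{n_1, n_2\}$ on the nose.

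For the first step I would establish $\alpha_2^c(G) = \max\{n_1, n_2\}$, in close analogy with Lemma \ref{lemma:bipartite_independence}. The lower bound is free because either partite set is independent, hence $2$-component independent. For the upper bound, take a $2$-component independent set $S$ with vertices $u \in S \cap A$ and $v \in S \cap B$ (if $S$ sits in a single partite set we are already done). If $u$ and $v$ are non-adjacent, the neighborhoods $N(u) \subseteq B$ and $N(v) \subseteq A$ are disjoint and of total size at least $2\delta(G)$; moreover each of $u$ and $v$ has at most one neighbor inside $S$, lest its component in $G[S]$ have order at least $3$. This leaves $|S^C| \geq 2\delta(G) - 2 \geq \lfloor n/2 \rfloor$, hence $|S| \leq \lceil n/2 \rceil \leq \max\{n_1, n_2\}$. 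In the opposite case, every $u' \in S \cap A$ is adjacent to every $v' \in S \cap B$, so the induced subgraph $G[(S \cap A) \cup (S \cap B)]$ is complete bipartite; the $2$-component constraint then forces $|S \cap A| = |S \cap B| = 1$, giving $|S| = 2 \leq \max\{n_1, n_2\}$ because $n \geq 4$.

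For the second step I would chain monotonicity with Lemma \ref{lemma:hmm}. Any cut witnessing $\lambda_3$ automatically witnesses $\lambda_2$, so $\lambda_3(G) \geq \lambda_2(G)$. Applying Lemma \ref{lemma:hmm} at $k = 2$ (valid since $n \geq 4 = 2k$ and $\delta(G) \geq \frac{1}{2}(\lfloor n/2 \rfloor + 2)$), $G$ is $\lambda_2$-optimal, so $\lambda_2(G) = \xi_2(G) \geq 2\delta(G) - 2 \geq \lfloor n/2 \rfloor \geq \min\{n_1, n_2\}$. Combined with the first step, this supplies the hypothesis $\lambda_3(G) \geq n - \alpha_2^c(G)$ demanded by Theorem \ref{theorem:main}, and the conclusion follows.

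The main obstacle is the dissociation-number calculation: the non-adjacent subcase is a direct echo of Lemma \ref{lemma:bipartite_independence}, but the complete-bipartite subcase requires the small but essential observation that the $2$-component condition collapses $S \cap A$ and $S \cap B$ to singletons, which is precisely where the hypothesis $n \geq 4$ earns its keep. Everything else is monotonicity and bookkeeping tied together by the general machinery of Theorem \ref{theorem:main}.
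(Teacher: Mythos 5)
Your proposal is correct, but it takes a genuinely different route from the paper. The paper applies Theorem \ref{theorem:main} at the lower girth threshold (using the edge scramble \(\mathcal{E}_2\), i.e.\ \(\ell=3\)): it computes \(\alpha(G)=\max\{n_1,n_2\}\) via Lemma \ref{lemma:bipartite_independence} and then needs only \(\lambda_2(G)\geq\min\{n_1,n_2\}\), which follows from Lemma \ref{lemma:hmm} with \(k=2\) and \(\xi_2(G)\geq 2\delta(G)-2\geq\lfloor n/2\rfloor\). You instead exploit bipartiteness to take \(\ell=4\), which forces you to compute the dissociation number; your argument that \(\alpha_2^c(G)=\max\{n_1,n_2\}\) is correct and is a genuine (if modest) strengthening of Lemma \ref{lemma:bipartite_independence} --- the non-adjacent case loses only \(2\) from \(2\delta(G)\) because each of \(u,v\) can have at most one neighbor in \(S\), and the all-adjacent case correctly collapses to \(|S|=2\) using \(n\geq 4\). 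Your reduction \(\lambda_3(G)\geq\lambda_2(G)\) by monotonicity, followed by the same application of Lemma \ref{lemma:hmm} at \(k=2\), is also valid. The net effect is that you prove the same equality with strictly more work: the paper's choice of \(\ell=3\) avoids the dissociation-number computation entirely, since \(n-\alpha(G)=\min\{n_1,n_2\}\) already matches the target. One cosmetic point: before invoking \(\lambda_2\)-optimality you should, as the paper does, dispose of the degenerate case \(\lambda_2(G)=\infty\) (where the needed inequality is trivial), since \(\lambda_k\)-optimality as defined presupposes \(\lambda_k\)-connectivity; this does not affect the substance of your argument.
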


\begin{proof} By Lemma \ref{lemma:bipartite_independence} and the fact that \(\frac{1}{2}(\lfloor n/2\rfloor +2)\geq n/4\), we have \(\alpha(G)=\max\{n_1,n_2\}\). 

In order to apply Theorem \ref{theorem:main} with \(\ell=2\), we need \(\lambda_2(G)\geq n- \alpha(G)=n-\max\{n_1,n_2\}=\min\{n_1,n_2\}\).  If \(\lambda_2(G)=\infty\), we are done; otherwise we have that \(G\) is \(\lambda_2\)-connected.  Since \(\delta(G)\geq \frac{1}{2}\left(\lfloor n/2\rfloor+2\right)\), we know that \(G\) is \(\lambda_2\)-optimal by Lemma \ref{lemma:hmm}.  Thus we have \(\lambda_2(G)=\xi_2(G)\geq 2\delta(G)-2\geq \lfloor n/2\rfloor +2-2=\lfloor n/2\rfloor \geq \min\{n_1,n_2\}\). This gives us \(\sn(G)=\gon(G)=n-\alpha(G)=\min\{n_1,n_2\}\), as desired.
\end{proof}

This gives us, for instance, the gonality of \emph{crown graphs}. The crown graph on \(2m\) vertices is a complete bipartite graph \(K_{m,m}\) with \(m\) pairwise non-incident edges deleted, so that the minimum valence is \(m-1\); for \(m\geq 4\), these graphs satisfy the hypotheses of our theorem, giving us a gonality of \(m\). To see that we cannot in general weaken our assumption on \(\delta(G)\) to \(\delta(G)\geq n/4\), consider the cycle graph \(C_8\) (a bipartite graph on \(4\) and \(4\) vertices), which has \(\delta(C_8)=2\geq |V(C_8)|/4\).  We have \(\gon(C_8)=2<4=\min\{n_1,n_2\}\), so the theorem would not hold.

We also obtain a result without making an assumption on minimum valence, as long as pairs of non-adjacent vertices have sufficiently high total valence.

\begin{theorem}\label{theorem:bipartite2}
Let \(G\) be a simple bipartite graph on \(n\geq 6\) vertices such that for any pair of non-adjacent vertices \(u\) and \(v\), we have \(\val(u)+\val(v)\geq 2\lfloor n/4\rfloor +3\).  Then \(\sn(G)=\gon(G)=n-\alpha_2^c(G)\).
\end{theorem}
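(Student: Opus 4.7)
The plan is to apply Theorem \ref{theorem:main} with $\ell=4$. Since a bipartite graph automatically has girth at least $4$, the theorem immediately yields the upper bound $\gon(G)\leq n-\alpha_2^c(G)$, and the equalities $\sn(G)=\gon(G)=n-\alpha_2^c(G)$ will follow once we establish $\lambda_3(G)\geq n-\alpha_2^c(G)$.

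First I would note that each partite set is an independent set, hence a dissociation set, so $\alpha_2^c(G)\geq\max\{n_1,n_2\}$; writing $n_1\leq n_2$ without loss of generality, it suffices to prove $\lambda_3(G)\geq n_1$. Next I would extract a consequence of the valence hypothesis: any two vertices inside the same partite set are automatically non-adjacent, so within each partite set at most one vertex can have degree below $\lfloor n/4\rfloor+2$. A direct comparison $\lfloor n/4\rfloor+2\geq\tfrac{1}{2}(\lfloor n/2\rfloor+3)$ (valid for $n\geq 6$) shows that, whenever no such ``outlier'' exists, Lemma \ref{lemma:hmm} with $k=3$ applies and gives $\lambda_3(G)=\xi_3(G)$. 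I would then compute $\xi_3(G)$ by noting that every connected $3$-vertex subgraph of a triangle-free graph is a path $u-v-w$, with $u,w$ lying in a common partite set in our bipartite setting; the pair hypothesis applied to $\{u,w\}$ together with $\val(v)\geq 2$ yields outdegree at least $2\lfloor n/4\rfloor+1$, and a short check on $n\bmod 4$ shows this is at least $\lfloor n/2\rfloor\geq n_1$, finishing the no-outlier case.

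The main obstacle is the residual case in which one or both partite sets contain a vertex of degree below $\lfloor n/4\rfloor+2$, so that Lemma \ref{lemma:hmm} does not apply directly. I would handle this by exploiting that such an outlier simultaneously inflates $\alpha_2^c(G)$ beyond $\max\{n_1,n_2\}$ (for example, a vertex of degree at most $1$ in one partite set can be appended to the opposite partite set without destroying the dissociation property), so the target $n-\alpha_2^c(G)$ shrinks in precisely the regime where $\lambda_3(G)$ might be smaller. Making this balance precise across the residue classes of $n\bmod 4$, and handling outlier degrees lying strictly between $2$ and $\lfloor n/4\rfloor+1$, is the delicate step; as a fallback I would edge-count directly on any hypothetical minimum connected $3$-separation $(A,A^C)$, using Lemma \ref{lemma:min_with_connected} to keep both sides connected and decomposing the cut via its intersections with the two partite sets, and applying the pair hypothesis to vertices split across the cut.
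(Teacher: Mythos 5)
Your main line of argument is exactly the paper's proof: apply Theorem \ref{theorem:main} with $\ell=4$, invoke Lemma \ref{lemma:hmm} with $k=3$ to get $\lambda_3(G)=\xi_3(G)$, bound the outdegree of a $3$-vertex path by applying the pair hypothesis to its two (non-adjacent) endpoints to get $\xi_3(G)\geq 2\lfloor n/4\rfloor+1$, and compare with $n-\alpha_2^c(G)\leq \lfloor n/2\rfloor\leq 2\lfloor n/4\rfloor+1$. All of those inequalities check out, so your ``no-outlier'' case is correct and is essentially identical to what the paper does.

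The divergence is in how the hypothesis of Lemma \ref{lemma:hmm} is verified. The paper simply asserts that $G$ is $\lambda_3$-optimal ``by Lemma \ref{lemma:hmm} and our assumptions on $G$,'' whereas you correctly observe that the stated hypothesis only bounds $\val(u)+\val(v)$ for non-adjacent pairs, and therefore permits one vertex per partite set (two in total, necessarily adjacent to each other) of valence at most $\lfloor n/4\rfloor+1$, so that $\delta(G)\geq \frac{1}{2}(\lfloor n/2\rfloor+3)$ need not hold. This is a real issue, not a misreading: one can build connected bipartite graphs satisfying the theorem's hypothesis with $\delta(G)=\lfloor n/4\rfloor+1<\frac{1}{2}(\lfloor n/2\rfloor+3)$, so Lemma \ref{lemma:hmm} does not literally apply to them. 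However, your proposal does not close this residual case --- you describe two candidate strategies (inflating $\alpha_2^c(G)$ using the outlier, or edge-counting directly on a minimum connected separation via Lemma \ref{lemma:min_with_connected}) and explicitly defer the ``delicate step,'' and the first strategy as sketched only covers outliers of degree at most $1$. So as written the proposal is incomplete precisely where it departs from (and improves on the care of) the paper: you have identified a gap that the paper's own one-line appeal to Lemma \ref{lemma:hmm} glosses over, but you have not filled it.
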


\begin{proof}
Any simple bipartite graph has girth at least \(4\).  Thus in order to apply Theorem \ref{theorem:main} with \(\ell=4\), it suffices to show that \(\lambda_3(G)\geq n-\alpha_2^c(G)\). We know by Lemma \ref{lemma:hmm} and our assumptions on \(G\) that \(G\) is \(\lambda_3\)-optimal, so \(\lambda_3(G)=\xi_3(G)\).  A connected subgraph of a bipartite graph on three vertices must be a path. Letting \(u\) and \(v\) denote the non-adjacent vertices of such a path, we know that the outdegree of that path is at least \(\val(u)+\val(v)-2\geq 2\lfloor n/4\rfloor +3-2=2\lfloor n/4\rfloor +1.\)  Thus \(\lambda_3(G)\geq 2\lfloor n/4\rfloor +1\).

We know \(\alpha_2^c(G)\geq \alpha_1^c(G)\), and since \(G\) is bipartite we know \(\alpha_1^c(G)\geq \lceil n/2\rceil \) (as each partite set forms an independent set), meaning \(n-\alpha_2^c(G)\leq \lfloor \frac{n}{2}\rfloor\).  We thus have \(n-\alpha_2^c(G)\leq  \lfloor n/2\rfloor \leq 2\lfloor n/4\rfloor +1\leq \lambda_3(G)\), which allows us to conclude that \(\sn(G)=\gon(G)=n-\alpha_2^c(G)\), as desired.
\end{proof}

\subsection{Hypercube graphs}

The hypercube graph \(Q_n\), pictured for \(n=3\) in Figure \ref{fig:cube}, has gonality at most \(2^{n-1}\); this was first remarked in \cite[\S 4]{debruyn2014treewidth}, where it was conjectured to hold with equality for all \(n\).  There are several ways to see the upper bound holds; for instance, \(Q_n\) is a bipartite graph, and a positive rank divisor can be found by putting a chip on each vertex in one of the partite sets (each of which has \(2^{n-1}\) vertices). Alternatively, \(Q_n\) is a Cartesian product \(Q_{n-1}\square K_2\) and any graph of the form \(G\square K_2\) has a positive rank divisor of degree \(|V(G)|\) by \cite[Proposition 3]{aidun2019gonality}. One strategy to prove that \(\gon(Q_n)=2^{n-1}\) is to construct a scramble \(\cS\) on \(Q_n\) of order at least \(2^{n-1}\); it would then follow that \(2^{n-1}\leq ||\cS||\leq \sn(G)\leq \gon(G)\leq 2^{n-1}\), meaning that all numbers are equal to \(2^{n-1}\).  This strategy is implicit in \cite[Figure 4.2]{scramble} to give a concise proof that \(\gon(Q_3)=4\).
We will use this strategy to compute the gonality of \(Q_n\) for \(n=4\) and \(n=5\), and will show that this strategy does not work for \(n\geq 6\).  This both strengthens the cases known for a long-standing open conjecture, and demonstrates that scramble number will not be able to resolve it in general.

 \begin{figure}[hbt]
    \centering
  \includegraphics{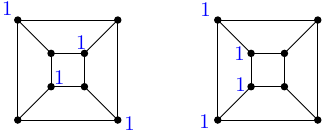}
    \caption{The cube graph \(Q_3\), with two positive rank divisors of degree \(4\).}
    \label{fig:cube}
    \end{figure}

The scrambles we will use are \(\mathcal{E}_k\) for some choice of \(k\), meaning that understanding \(\lambda_k(Q_n)\) will be important.  To this end, we recall the following lemma.  

\begin{lemma}[Theorem 3.1 in \cite{extra_edge_connectivity_hypercubes}] \label{lemma:lambdak}
Let \(n\geq 4\).  If \(\frac{2^{n-1}}{3}<k\leq 2^{n-1}\), then \(\lambda_k(Q_n)=2^{n-1}\).
\end{lemma}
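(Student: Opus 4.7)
The plan is to prove the two inequalities $\lambda_k(Q_n) \leq 2^{n-1}$ and $\lambda_k(Q_n) \geq 2^{n-1}$ separately.

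For the upper bound, we exhibit an explicit disconnecting edge set of size $2^{n-1}$. Fix any coordinate $i \in \{1, \ldots, n\}$ and let $T$ consist of all $2^{n-1}$ edges of $Q_n$ whose endpoints differ in coordinate $i$. This $T$ is a perfect matching, and $Q_n - T$ is the disjoint union of two copies of $Q_{n-1}$, each on $2^{n-1} \geq k$ vertices. Hence $\lambda_k(Q_n) \leq 2^{n-1}$.

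For the lower bound, suppose $T \subseteq E(Q_n)$ disconnects $Q_n$ into components each of order at least $k$. Pick any component, let $S$ be its vertex set, and (by symmetry) assume $|S| \leq 2^{n-1}$; the hypothesis $k > 2^{n-1}/3$ then forces $|S| \in (2^{n-1}/3, 2^{n-1}]$. Since $|T| \geq |E(S, S^c)|$, it suffices to prove the edge-isoperimetric statement
\[
|E(S, S^c)| \geq 2^{n-1} \quad \text{for every } S \subseteq V(Q_n) \text{ with } 2^{n-1}/3 < |S| \leq 2^{n-1}.
\]
I would prove this by induction on $n$ using the product decomposition $Q_n = Q_{n-1} \square K_2$. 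Splitting $V(Q_n) = A_0 \sqcup A_1$ along a well-chosen coordinate (with each $A_j$ inducing a copy of $Q_{n-1}$) and letting $\phi \colon A_0 \to A_1$ denote the matching between the copies, set $S_j = S \cap A_j$, $a = |S_0|$, $b = |S_1|$, and $x = |S_0 \cap \phi(S_1)|$. Then
\[
|E(S, S^c)| = |E_{Q_{n-1}}(S_0, A_0 \setminus S_0)| + |E_{Q_{n-1}}(S_1, A_1 \setminus S_1)| + (a + b - 2x),
\]
since each matching edge lies in $E(S,S^c)$ exactly when its two endpoints are split between $S$ and $S^c$. Choosing the coordinate direction that balances the split of $S$ as evenly as possible typically places both $a$ and $b$ in the inductive range $(2^{n-2}/3,\, 5 \cdot 2^{n-2}/3)$, so the inductive hypothesis yields $|E_{Q_{n-1}}(S_j, A_j \setminus S_j)| \geq 2^{n-2}$ for each $j$ and the total is at least $2^{n-1}$.

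The main obstacle is the unbalanced regime, in which every coordinate splits $S$ very unevenly and the inductive hypothesis fails for one of the halves. This happens essentially when $S$ is close to an $(n-2)$-dimensional subcube; in that case the matching contribution $a + b - 2x \geq |a - b|$ is itself large and, when combined with the degree-sum identity $(n-1)|S_j| = 2|E_{Q_{n-1}}(S_j, S_j)| + |E_{Q_{n-1}}(S_j, A_j \setminus S_j)|$ applied to the smaller $S_j$, still delivers $|E(S, S^c)| \geq 2^{n-1}$. The base case $n = 4$ reduces to a finite verification on the $16$-vertex graph $Q_4$, which can be carried out directly or by invoking the closed-form edge-isoperimetric formula of Harper--Lindsey--Bernstein--Hart.
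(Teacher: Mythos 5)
The paper does not prove this lemma at all: it is imported verbatim as Theorem 3.1 of the cited reference on extra edge-connectivity of hypercubes, so there is no internal argument to compare against and any proof you give must stand on its own. Your upper bound is fine (the $2^{n-1}$ edges in one coordinate direction form a perfect matching whose removal leaves two copies of $Q_{n-1}$, each on $2^{n-1}\geq k$ vertices), and the reduction of the lower bound to the edge-isoperimetric statement ``$|E(S,S^C)|\geq 2^{n-1}$ whenever $2^{n-1}/3<|S|\leq 2^{n-1}$'' is the right move, modulo the small point that if the chosen component has more than $2^{n-1}$ vertices you must pass to its complement, which still has more than $2^{n-1}/3$ vertices because it contains another component of order at least $k$.

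The gap is that this isoperimetric statement is the entire content of the lemma, and your treatment of the ``unbalanced regime'' does not hold up. First, the obstruction is mischaracterized: the balanced case can fail without $S$ resembling an $(n-2)$-dimensional subcube. In $Q_5$ with $m=6$, the set $\{00000,00001,00010,00011,00100,01000\}$ has $\min(a_i,b_i)\in\{0,1,1,2,2\}$ over the five directions, all at most $2^{n-2}/3=8/3$, so no direction admits the inductive hypothesis on both halves, yet this set is a $Q_2$ with two pendant vertices, nothing like a $Q_3$. Second, the target inequality is \emph{tight} at the bottom of the range: initial segments of size $\lceil 2^{n-1}/3\rceil$ (nested unions of subcubes, e.g.\ $m=6=2^2+2^1$ in $Q_5$ with boundary exactly $16$, or $m=11=2^3+2^1+2^0$ in $Q_6$ with boundary exactly $32$) achieve boundary exactly $2^{n-1}$. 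So there is no slack to absorb crude estimates; in particular the bound $|E(S,S)|\leq \tfrac12|S|\log_2|S|$ underlying your degree-sum step yields only $m(n-\log_2 m)\approx (1+\log_2 3)\,2^{n-1}/3\approx 0.86\cdot 2^{n-1}$ when $m$ is near $2^{n-1}/3$. A complete argument must, in the unbalanced case, apply the inductive hypothesis to the large half (or to its complement inside its copy of $Q_{n-1}$), prove a separate sharp bound for the small half of size $s\leq 2^{n-2}/3$, and add the cross-edge count $m-2s$, with a case analysis on where $s$ falls; none of that is carried out. The honest options are to cite the theorem as the paper does, to verify the inequality over the stated range from the exact Harper--Lindsey--Bernstein--Hart profile, or to write out the induction with the full case analysis.
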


\begin{lemma}\label{theorem:snQ4} The scramble \(\mathcal{E}_3\) on \(Q_4\) has order \(8\).
\end{lemma}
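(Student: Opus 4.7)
The plan is to apply Theorem~\ref{theorem:order_ek}, which reduces computing $\|\mathcal{E}_3\|$ on $Q_4$ (with $n=16$) to evaluating $\min(\lambda_3(Q_4),\, 16-\alpha_2^c(Q_4))$. The edge-connectivity term is handed to us by Lemma~\ref{lemma:lambdak}: with $n=4$ and $k=3$ the hypothesis $2^{n-1}/3 = 8/3 < k \leq 2^{n-1} = 8$ is satisfied, so $\lambda_3(Q_4) = 8$. The remainder of the proof is to show that the dissociation number satisfies $\alpha_2^c(Q_4) = 8$, which yields $16 - \alpha_2^c(Q_4) = 8$ and hence $\|\mathcal{E}_3\| = 8$.

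For the easy direction $\alpha_2^c(Q_4) \geq 8$, I would exhibit either bipartite class of $Q_4$; each is an independent set of size $8$, and is in particular a $2$-component independent set.

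The work is in proving $\alpha_2^c(Q_4) \leq 8$. My plan is to argue by contradiction, assuming a dissociation set $S$ of size $9$ exists. Writing $A$ and $B$ for the bipartite classes (each of size $8$) and setting $a = |S \cap A|$, $b = |S \cap B|$ with $a \geq b$, the possible splits are $(a,b) \in \{(8,1),(7,2),(6,3),(5,4)\}$. The first three cases would be disposed of uniformly: when $|A \setminus S| \leq 2$, each of the $b$ vertices of $S \cap B$ has valence $4$ into $A$, so at least $4 - |A \setminus S| \geq 2$ of its neighbors lie inside $S \cap A$, violating the dissociation requirement that each vertex of $S$ have at most one neighbor in $S$.

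The main obstacle is the balanced case $(a,b) = (5,4)$, where $|A \setminus S| = 3$ and the valence count only gives at least one neighbor of each vertex of $S \cap B$ inside $S \cap A$. Here dissociation forces this number to equal exactly $1$, and so each vertex of $S \cap B$ must be adjacent to all three vertices of $A \setminus S$. To derive a contradiction I would invoke the geometry of the hypercube: any two distinct vertices of $B$ lie at even Hamming distance (either $2$ or $4$) and so have at most two common neighbors in $A$ (exactly two at distance $2$, none at distance $4$). In particular, $\bigl|\bigcap_{v \in S \cap B} N(v)\bigr| \leq 2$, contradicting the containment of the three-element set $A \setminus S$. This completes the verification of $\alpha_2^c(Q_4) = 8$ and thereby the lemma. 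The case $(5,4)$ is where the argument genuinely needs the fine combinatorics of $Q_4$ rather than just valence counting in a triangle-free bipartite graph.
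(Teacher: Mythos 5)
Your proposal is correct, and it establishes the egg-cut half exactly as the paper does (via Lemma~\ref{lemma:lambdak}, giving $\lambda_3(Q_4)=8$), but it handles the hitting-set half by a genuinely different argument. The paper lower-bounds $h(\mathcal{E}_3)$ directly with a packing argument: it exhibits four pairwise disjoint $4$-cycles in $Q_4$ and observes that any hitting set must meet each in at least two vertices (else three unhit vertices of some $C_4$ form an unhit $P_3$ egg), giving $h(\mathcal{E}_3)\geq 8$ in two lines. You instead compute the dissociation number exactly, showing $\alpha_2^c(Q_4)=8$ by a bipartite case analysis on the splits $(8,1),(7,2),(6,3),(5,4)$, with the balanced case killed by the fact that two vertices in the same partite class of $Q_4$ have at most two common neighbors; all of these steps check out (in the $(5,4)$ case, each of the four vertices of $S\cap B$ is forced to be adjacent to all three vertices of $A\setminus S$, and already two such vertices would need three common neighbors). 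The paper's packing argument is shorter and only needs the inequality $h(\mathcal{E}_3)\geq 8$, which suffices since the matching upper bound on the order comes for free later from $\gon(Q_4)\leq 8$; your route is longer but is self-contained in pinning down $\|\mathcal{E}_3\|=8$ exactly via Theorem~\ref{theorem:order_ek}, and it records the standalone fact $\alpha_2^c(Q_4)=8$, which the paper never states explicitly.
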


\begin{proof}
By Lemma \ref{lemma:lambdak}, $e(\mathcal{E}_3) \geq \lambda_3 (Q_4) = 2^{4-1} = 8$.  Since $||\mathcal{E}_3|| = \min\{h(\mathcal{E}_3), e(\mathcal{E}_3)\}$ it remains to show that \(h(\mathcal{E}_3)\geq 8\).  Let \(C\) be a hitting set for \(\mathcal{E}_3\), and consider the four disjoint copies of \(C_4\) in \(Q_4\) highlighted in Figure \ref{fig:cube4}.  The set \(C\) must contain at least two vertices from each \(C_4\); otherwise a connected subgraph on \(3\) vertices would be unhit.  Thus \(|C|\geq 2\cdot 4=8\), from which we conclude that $||\mathcal{E}_3|| = \min\{h(\mathcal{E}_3), e(\cS)\} \geq \min\{8,8\} = 8$, as desired. 
\end{proof}

\begin{proposition}\label{theorem:gonalityQ4} We have
$\sn(Q_4)=\gon(Q_4) = 8$.
\end{proposition}

\begin{proof}
From the previous lemma we have
\[8\leq \sn(Q_4)\leq \gon(Q_4)\leq 8,\]
implying our desired result.
\end{proof}

\begin{figure}[hbt]
    \centering
    \includegraphics{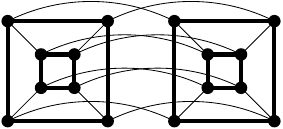}
    \caption{The hypercube $Q_4$ with four disjoint \(4\)-cycles highlighted.}
    \label{fig:cube4}
\end{figure}

Since \(Q_4=C_4\square C_4\), this result follows from \cite{scramble}, where it was proved that for the Cartesian product of two cycle graphs we have \(\sn(C_m\square C_n)=\gon(C_m\square C_n)=\min\{2m,2n\}\).  However, the next case of \(Q_5\) was open prior to the work in this paper.

\begin{lemma}
\label{theorem:snQ5} The scramble \(\mathcal{E}_6\) on \(Q_5\) has order \(16\).
\end{lemma}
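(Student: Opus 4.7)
The plan is to apply Theorem~\ref{theorem:order_ek} to the uniform scramble $\mathcal{E}_6$, which gives
\[\|\mathcal{E}_6\| = \min\bigl(\lambda_6(Q_5),\, 32 - \alpha_5^c(Q_5)\bigr).\]
I would handle the two terms of this minimum separately. For the first, I would invoke Lemma~\ref{lemma:lambdak} with $n=5$ and $k=6$: since $2^{n-1}/3 = 16/3 < 6 \leq 16 = 2^{n-1}$, the hypothesis is met and so $\lambda_6(Q_5) = 16$. This handles $e(\mathcal{E}_6)$, analogous to how $e(\mathcal{E}_3)$ was controlled in the proof of Lemma~\ref{theorem:snQ4}.

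For the second term, the work lies in showing $\alpha_5^c(Q_5) \leq 16$, which gives $h(\mathcal{E}_6) = 32 - \alpha_5^c(Q_5) \geq 16$. The lower bound $\alpha_5^c(Q_5) \geq 16$ is immediate from either partite class of the bipartition of $Q_5$, so the bound will be tight. To prove the upper bound, I would fix a 5-component independent set $S \subseteq V(Q_5)$ and exploit the decomposition $Q_5 = Q_4 \square K_2$: write $V(Q_5) = V_0 \sqcup V_1$ with each $V_i \cong Q_4$ and with the 16 cross-edges forming a perfect matching. Set $S_i = S \cap V_i$ and classify each cross-edge $\{v,v'\}$ by which of $v, v'$ lies in $S$. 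The key observation is that whenever $v \in S_0$ sits in a component of $G[S]$ confined to $V_0$, its cross-partner $v'$ must lie in $V_1 \setminus S$ (otherwise the cross-edge would extend the component across layers and, combined with the $V_1$-side component containing $v'$, produce a component of size greater than $5$). Summing over all vertices in such "pure" layer components yields the exclusion inequality
\[|V_1 \setminus S| \;\geq\; |S_0| - p_1, \qquad |V_0 \setminus S| \;\geq\; |S_1| - p_1,\]
where $p_1$ counts cross-edges lying inside $G[S]$. Adding these gives $|S| \leq 16 + p_1$.

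To remove the dependence on $p_1$, I would analyze the mixed components directly: any component spanning both layers has $p + q \leq 5$ vertices with at most $\min(p,q) \leq 2$ cross-edges, so its $V_0$- and $V_1$-sides are each connected subgraphs of their layer whose external $Q_4$-neighbors are forced out of $S$ as well. Accounting for these additional $Q_4$-internal exclusions (using that two adjacent vertices in $Q_4$ already force 6 neighbors outside $S$, since $Q_4$ has girth $4$) shows that each mixed component of cross-size $k$ consumes strictly more exclusion budget than it produces in "extra" $S$-vertices, collapsing the bound $|S| \leq 16 + p_1$ back to $|S| \leq 16$. Combining with the lower bound and with $\lambda_6(Q_5)=16$, we conclude $\|\mathcal{E}_6\| = 16$.

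The main obstacle will be the mixed-component case. Ruling out configurations like $|S_0|=|S_1|=10$ (each a $\lceil \alpha_5^c(Q_4) \rceil$-achieving union of two stars) with a few connecting cross-edges requires a careful case analysis on the shape and size of each mixed component, since a naive layer-by-layer bound only gives $|S| \leq 2\alpha_5^c(Q_4)=20$. The saving comes from the fact that any two adjacent vertices inside a mixed component have large joint $Q_4$-neighborhood, forcing many additional vertices into $V(Q_5) \setminus S$; tracking this carefully (and using that $Q_5$ contains no $K_{2,3}$, so mixed components are sparse) is where the technical work lies.
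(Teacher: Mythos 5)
Your first half matches the paper exactly: $e(\mathcal{E}_6)=\lambda_6(Q_5)=16$ via Lemma~\ref{lemma:lambdak}, and the problem correctly reduces to showing $h(\mathcal{E}_6)\geq 16$, equivalently $\alpha_5^c(Q_5)\leq 16$. But that inequality is the entire substance of the lemma, and your proposal does not actually prove it. The matching inequality $|S|\leq 16+p_1$ is fine, but the step that is supposed to eliminate $p_1$ is only gestured at, and the intermediate claims you lean on are not all true. A mixed component's $V_0$- and $V_1$-sides need not be connected subgraphs of their layers: the component $a-a'-b'-c'-c$ with $a'b'c'$ a path in $V_1$ and $aa'$, $cc'$ cross-edges has $V_0$-side $\{a,c\}$ at distance $2$ in $Q_4$. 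Likewise, the external $Q_4$-neighbors of two adjacent $S$-vertices are not ``forced out of $S$'' unless their component already has five vertices; a $5$-vertex path confined to one layer is a perfectly legal component, so no exclusion is generated by smaller pieces. Your closing paragraph concedes that ruling out configurations such as $|S_0|=|S_1|=10$ joined by a few cross-edges ``requires a careful case analysis'' --- that case analysis is precisely the missing proof.

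For comparison, the paper takes a different decomposition: it splits $Q_5$ into four copies of $Q_3$ (namely $A_1,A_2,B_1,B_2$, with $Q_4=A_i\cup B_i$ in your language a further split of each layer), assumes a hitting set $H$ with $|H|=15$, and runs a pigeonhole argument on how the $15$ vertices distribute among the four cubes: each cube needs at least $3$, so some cube has exactly $3$, which forces an isolated unhit vertex plus an unhit $4$-vertex piece in that cube, and the forced neighbors cascade through the other three cubes until the count exceeds $15$. The finer $Q_3$-decomposition is what makes the bookkeeping tractable, since removing $3$ vertices from $Q_3$ leaves very constrained configurations (this is where Figure~\ref{figure:placements_on_b2} enters). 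If you want to salvage your two-layer approach you would need to supply the full analysis of mixed components, including the disconnected-projection cases above; as written, the proposal identifies the right target but leaves the essential combinatorial work undone.
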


\begin{proof}
By Lemma \ref{lemma:lambdak}, $e(\mathcal{E}_6) = \lambda_6 (Q_5) = 2^{5-1} = 16$.  Since $||\mathcal{E}_6|| = \min\{h(\mathcal{E}_6), e(\mathcal{E}_6)\}$ it remains to show that \(h(\mathcal{E}_6)\geq 16\).

Let \(H\subset V(Q_5)\) with \(|H|=15\), and suppose for the sake of contradiction that \(H\) is a hitting set of \(\mathcal{E}_6\).  Decompose the graph \(Q_5\) into four subgraphs \(A_1,A_2,B_1,B_2\), each isomorphic to \(Q_3\), with vertices in \(A_i\) connected to the corresponding vertices in \(A_j\) and \(B_i\). If one of these subgraphs contains two or fewer vertices from \(H\), then the other vertices form a connected subgraph on at least six vertices unhit by \(H\); thus each of \(A_1,A_2,B_1,B_2\) contains at least three vertices of \(H\).  Since there are \(15\) elements of \(H\), one of the four subgraphs has at most, and therefore exactly, three vertices from \(H\); without loss of generality assume it is \(A_1\).

If \(A_1-H\) is a connected subgraph on five vertices, then all \(10\) neighbors of \(A_1-H\) in \(A_2\cup B_1\) must be included in \(H\); otherwise we would have a connected subgraph on six vertices unhit by \(H\).  But this means that \(|V(B_2)\cap H|\leq 2<3\), a contradiction.  Thus \(A_1-H\) is not connected.  The only way to disconnect \(Q_3\) by removing $3$ vertices is to delete the $3$ neighbors of a single vertex. Thus \(A_1\) contains an isolated vertex \(v\) unhit by \(H\), and a connected subgraph \(K\) on four vertices unhit by \(H\), as pictured in Figure \ref{figure:Q_5_corner}.

\begin{figure}[hbt]
    \centering
    \includegraphics{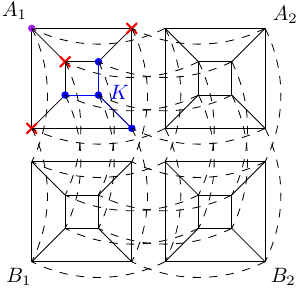}
    \caption{The placement of \(v\) and \(K\) inside \(A_1\); the three \(\times\)'s represent elements of \(H\)}
    \label{figure:Q_5_corner}
\end{figure}

Of the eight neighbors of \(K\) in \(A_2\cup B_1\), at least seven must be in \(H\) to avoid an unhit subgraph on six vertices.  Suppose for the moment that all eight are in \(H\).  We claim that \(A_2\) must have a fifth vertex in \(H\).  If not, then the four unhit vertices together with \(v\) form a connected subgraph on five vertices; and \(H\) must include the five neighbors of that subgraph in \(B_1\cup B_2\).  This gives a total of seven vertices from \(H\) in \(A_1\cup A_2\), and nine vertices from \(H\) in \(B_1\cup B_2\), a contradiction to \(|H|=15\); thus \(|V(A_2)\cap H|\geq 5\).  An identical argument shows \(|V(B_1)\cap H|\geq 5\), so \(|V(B_2)\cap H|\leq 2\).  It follows that \(B_2-H\) is a connected subgraph on at least six vertices unhit by \(H\), a contradiction.  Thus, of the eight neighbors of \(K\) in \(A_2\cup B_1\), exactly seven are in \(H\).

\begin{figure}[hbt]
    \centering
    \includegraphics{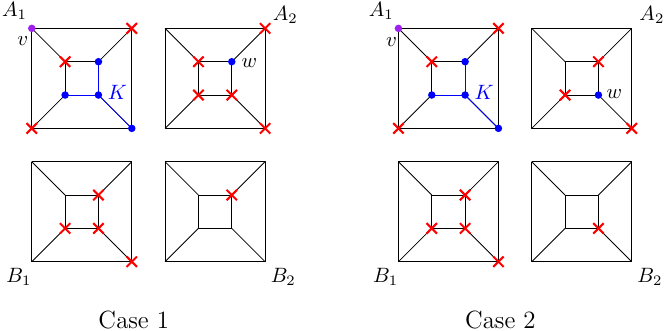}
    \caption{Two cases for the placement of \(H\) on \(Q_5\).}
    \label{figure:Q_5_cases_modified}
\end{figure}

Without loss of generality assume \(A_2\) contains the unique neighbor \(w\) of \(K\) unhit by \(H\). We split the proof into two cases:  where \(w\) is incident to one vertex in \(A_2\) neighboring \(L\), and where it is incident to three such vertices; these cases are illustrated in Figure \ref{figure:Q_5_cases_modified}.   In the first case, the two \(A_2\)-neighbors of \(w\) not neighboring \(K\), as well as the neighbor of \(w\) in \(B_2\), must also be in \(H\), as otherwise we would have an unhit graph on $6$ vertices; these vertices required in \(H\) are pictured on the left in Figure \ref{figure:Q_5_cases_modified}.  Note that this accounts for \(13\) elements of \(H\); the other two must be in \(B_2\), since \(|V(B_2)\cap H|\geq 3\).  But this leaves a connected subgraph on seven vertices unhit by \(H\), consisting of \(v\), two vertices from \(A_2\), and four vertices from \(B_1\), a contradiction.

Thus we must be in the second case, where all three of \(w\)'s neighbors in \(A_2\) also neighbor \(L\); this is pictured on the right in Figure \ref{figure:Q_5_cases_modified}, with the required elements of \(H\) marked.  So far this means that of vertices from \(H\) we have exactly three in \(A_1\), at least three in \(A_2\), at least four in \(B_1\), and at least one in \(B_2\).  We claim that there are at least four in \(A_2\), at least five in \(B_1\), and at least three in \(B_2\); this gives a total of \(15\), so in fact it must be exactly those amounts.  If there is no additional element of \(H\) in \(A_2\), then there is a $4$-vertex connected subgraph of \(A_2\) unhit by \(H\), which together with the isolated vertex in \(A_1\) forms such a subgraph on five vertices.  This necessitates another five elements of \(H\): one in \(B_1\), and four in \(B_2\), to prevent the subgraph from having six vertices; but this is too many, meaning that \(|A_2\cap H|\geq 4\).  A similar argument shows that \(|V(B_1)\cap H|\geq 4\).  Finally, \(|V(B_2)\cap H|\geq 3\) as already noted.  So, we have that \(|V(A_1)\cap H|=3\), \(|V(A_2)\cap H|=4\),  \(|V(B_1)\cap H|=5\), and \(|V(B_2)\cap H|=3\). 
    
Let \(L\) denote a subgraph of \(B_2\) with four vertices, namely the upper left vertex and its neighbors in \(B_2\).  Note that the eight neighbors of \(L\) in \(A_2\cup B_1\) are not yet known to be in \(H\).  We know that \(B_2-H\) must have a connected component on four or more vertices, since it is a cube with three vertices deleted.  We claim that that component contains at least two vertices, call them \(u_1\) and \(u_2\), of \(L\).  If the component has all five vertices of \(B_2-H\), then this certainly holds; and if the component has only four vertices of \(B_2-H\), then the configuration \(V(B_2)\cap H\) must be one of the ones pictured in Figure \ref{figure:placements_on_b2}, each of which yields two vertices of \(L\) in the connected component on four vertices.  To avoid introducing an unhit component of order six, of the four neighbors of \(u_1\) and \(u_2\) in \(A_2\cup B_1\) at least three must be in \(H\).  But that means either \(|A_2\cap H|\geq 5\) or \(|B_1\cap H|\geq 6\), a contradiction either to  \(|V(A_2\cap H)|= 4\) or to \(|V(B_1\cap H)|= 5\).

\begin{figure}[hbt]
    \centering
    \includegraphics{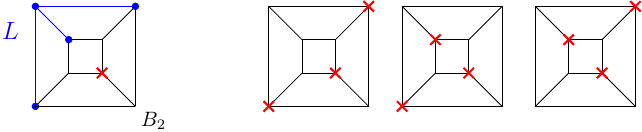}
    \caption{The subgraph \(L\) of \(B_2\); and all possible placements of \(H\) on \(B_2\) to disconnect it.  Each placement leaves two vertices of \(L\) in a connected component on \(4\) vertices.}
    \label{figure:placements_on_b2}
\end{figure}

In all cases we have reached a contradiction; therefore \(H\) must not be a hitting set.  We conclude that \(h(\mathcal{E}_6)\geq 16\), as desired.    
\end{proof}

\begin{theorem}\label{theorem:gonalityQ5} We have
$\sn(Q_5)=\gon(Q_5) = 16$.
\end{theorem}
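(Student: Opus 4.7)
The plan is to mirror exactly the sandwich argument used for $Q_4$ in Theorem \ref{theorem:gonalityQ4}, since the substantive work has already been done in Lemma \ref{theorem:snQ5}. I would begin by recording the upper bound $\gon(Q_5)\leq 16$, which is essentially free from the discussion opening this subsection: $Q_5$ is bipartite with each partite set containing $2^{5-1}=16$ vertices, so putting one chip on each vertex of either partite set yields an effective divisor of degree $16$ of positive rank (firing the complement of any target vertex $q$ in the opposite partite set moves one chip along each edge into $q$). Equivalently one could invoke $Q_5=Q_4\square K_2$ together with \cite[Proposition 3]{aidun2019gonality}.

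Next I would pull in the lower bound. By Lemma \ref{theorem:snQ5}, the $6$-uniform scramble $\mathcal{E}_6$ on $Q_5$ has order $16$, and by definition of scramble number this gives $\sn(Q_5)\geq \|\mathcal{E}_6\|=16$. Combining with the fundamental inequality $\sn(Q_5)\leq\gon(Q_5)$ of Theorem \ref{theorem:sn_leq_gon} produces the chain
\[
16 \;=\; \|\mathcal{E}_6\| \;\leq\; \sn(Q_5) \;\leq\; \gon(Q_5) \;\leq\; 16,
\]
which forces equality throughout and delivers the theorem.

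There is no genuine obstacle at this stage, since the only nontrivial input is the lower bound on $h(\mathcal{E}_6)$ which was handled case-by-case in Lemma \ref{theorem:snQ5}. The proof of the theorem itself is a one-line assembly of the upper bound, the lemma, and Theorem \ref{theorem:sn_leq_gon}, so I would keep the write-up to a few lines displaying the chain of inequalities above.
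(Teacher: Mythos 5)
Your proposal is correct and is essentially identical to the paper's proof: the upper bound $\gon(Q_5)\leq 16$ comes from the bipartite divisor (or the Cartesian-product argument) noted at the start of the subsection, the lower bound comes from Lemma \ref{theorem:snQ5}, and Theorem \ref{theorem:sn_leq_gon} closes the sandwich. No differences worth noting.
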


\begin{proof}
From the previous lemma we have
\[16\leq \sn(Q_5)\leq \gon(Q_5)\leq 16,\]
implying our desired result.
\end{proof}

Sadly, we cannot use this strategy to prove that \(\gon(Q_n)=2^{n-1}
\) for \(n\geq 6\):  as the following theorem shows, the scramble number is too small for this.

\begin{theorem}  
If $n\geq 6$, then $\sn(Q_n)<2^{n-1}.$
\end{theorem}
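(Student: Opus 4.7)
My plan is to use a coordinate partition of $V(Q_n)$ into small subcubes to force any high-order scramble to be inconsistent.  For $n \ge 6$, fix three coordinates and partition $V(Q_n) = A_1 \sqcup \cdots \sqcup A_8$ into the eight copies of $Q_{n-3}$ obtained by fixing those three coordinates.  Then $|A_i| = 2^{n-3}$ and $|E(A_i, A_i^C)| = 3 \cdot 2^{n-3} = \tfrac{3}{4} \cdot 2^{n-1}$, both strictly less than $2^{n-1}$.

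Suppose for contradiction that a scramble $\mathcal{S}$ on $Q_n$ satisfies $||\mathcal{S}|| \ge 2^{n-1}$, so that $h(\mathcal{S}), e(\mathcal{S}) \ge 2^{n-1}$.  Since $|A_i| < h(\mathcal{S})$, no $A_i$ is a hitting set, and in particular some egg sits in $A_i^C$.  If any egg were fully contained in some $A_i$, then $(A_i, A_i^C)$ would be an egg-cut of size $\tfrac{3}{4} \cdot 2^{n-1} < 2^{n-1}$, contradicting $e(\mathcal{S}) \ge 2^{n-1}$.  So every egg must straddle at least two of the parts, and hence its image in the quotient graph $Q_3$ (whose vertices index the $A_i$'s) is a connected subset of size at least $2$.

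The images of the eggs form an auxiliary scramble $\mathcal{S}_H$ on $Q_3$.  Lifting a hitting set $X_H$ for $\mathcal{S}_H$ to the union of the corresponding parts in $Q_n$ gives a hitting set for $\mathcal{S}$ of size $|X_H| \cdot 2^{n-3}$, and lifting an egg-cut scales its size by exactly $2^{n-3}$; hence $h(\mathcal{S}_H), e(\mathcal{S}_H) \ge 4$ and $||\mathcal{S}_H|| \ge 4 = \sn(Q_3)$.  Since $Q_3$ is bipartite with two classes of size $4$, every connected subset of $V(Q_3)$ of size $\ge 2$ meets both classes; each bipartition class therefore serves as a hitting set for $\mathcal{S}_H$ of size $4$, whose lift is a hitting set for $\mathcal{S}$ of size exactly $4 \cdot 2^{n-3} = 2^{n-1}$.

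The main obstacle is to improve this equality to a strict inequality.  My plan is to show that at least one vertex of the lifted hitting set is redundant, producing a hitting set of size strictly less than $2^{n-1}$ and contradicting $h(\mathcal{S}) \ge 2^{n-1}$.  A vertex $v \in A_i$ of the lifted set is essential only if some egg meets the lifted set solely at $v$, which forces that egg's $Q_3$-image to contain exactly one vertex of the chosen bipartition class and to intersect $A_i$ only in $\{v\}$.  A careful combinatorial analysis --- comparing the two bipartition classes of $Q_3$ and applying a pigeonhole or double-counting argument across such essential eggs --- should produce either a redundant vertex or an egg-cut of size below $2^{n-1}$ inside some part $A_i$, completing the contradiction and establishing $\sn(Q_n) < 2^{n-1}$.
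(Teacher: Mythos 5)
There is a genuine gap at exactly the point you flag as ``the main obstacle.'' Everything up to the construction of the lifted hitting set is sound: the quotient scramble $\mathcal{S}_H$ on $Q_3$ is well defined, the scaling of hitting sets and egg-cuts by $2^{n-3}$ is correct, and lifting a bipartition class of $Q_3$ does give a hitting set for $\mathcal{S}$ of size exactly $2^{n-1}$. But that only yields $h(\mathcal{S})\leq 2^{n-1}$, which is consistent with $||\mathcal{S}||\geq 2^{n-1}$, and the step that is supposed to turn this into a strict inequality is asserted rather than proved. It is not evident that a redundant vertex must exist: for each vertex $v$ of the lifted set one can easily imagine a witness egg $\{v,w\}$ with $w$ in an adjacent subcube outside the lifted set, so that \emph{every} vertex is essential; ruling such configurations out requires a real argument (presumably exploiting $e(\mathcal{S})\geq 2^{n-1}$ again), and the proposal gives no indication of how the ``pigeonhole or double-counting'' would run or where the hypothesis $n\geq 6$ enters. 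As written, the proof does not close.

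For comparison, the paper's argument avoids the quotient entirely and is much more local: it takes $S$ to be all but one vertex $v$ of a bipartition class of $Q_n$, a set of size $2^{n-1}-1$. Then $Q_n-S$ is just a star on $v$ together with isolated vertices, so any unhit egg is either a single isolated vertex or sits inside the star $C$ on $n+1$ vertices; in each case one gets either a tiny hitting set, or an egg-cut of size at most $\outdeg(C)=n(n-1)$, and the inequality $n(n-1)<2^{n-1}$ for $n\geq 6$ finishes the proof. That choice of separating set --- one that leaves a complement with very constrained structure --- is the missing idea; if you want to salvage your approach you would need an analogous structural handle on the eggs avoiding your lifted hitting set, which the eight-subcube decomposition does not by itself provide.
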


\begin{proof}  

Let \(\cS\) be a scramble on \(Q_n\) of maximal order.  
We will prove that \(||\cS||<2^{n-1}\).

Recall that \(Q_n\) is a bipartite graph, partitioned into the vertices whose coordinates sum to an even number and the vertices whose coordinates sum to an odd number.  Let \(S\) be a set of any \(2^{n-1}-1\) vertices whose coordinates sum to an even number, with \(v\) the only such vertex left out from \(S\).  Then \(Q_n-S\) consists of isolated vertices (those vertices with odd coordinate sum that do not neighbor \(v\)) as well as a component \(C\) consisting of \(v\) and its \(n\) adjacent vertices.

If \(S\) is a hitting set for \(\cS\), then \(h(\cS)\leq 2^{n-1}-1\) and we are done.  Otherwise, there must be an egg \(E\) in \(V(Q_n)-S\). First suppose it is an isolated vertex.  Then either \((E,E^C)\) forms an egg-cut of size \(n\), giving \(e(\cS)\leq n<2^{n-1}\); or every egg in \(\cS\) intersects \(E\), in which case \(E\) is a hitting set of size \(1\) and \(h(\cS)=1<2^{n-1}\).  Now suppose the egg is not an isolated vertex; then it must be contained in \(C\).  If \((C,C^C)\) is not an egg-cut, then every egg intersects \(C\), so \(h(\cS)\leq |C|=n+1<2^{n-1}\).  Otherwise, \((C,C^C)\) is an egg-cut of size \(\textrm{outdeg}(C)=n(n-1)\).  In this case \(e(\cS)\leq n(n-1)<2^{n-1}\) since \(n\geq 6\).  In every scenario, we can conclude that \(||\cS||<2^{n-1}\).
\end{proof}

There are several other graph gonalities we can compute using our results for \(Q_4\) and \(Q_5\).  The \emph{\(n\)-dimensional folded cube graph \(FQ_n\)} is constructed by adding to \(Q_n\) edges connecting each vertex to the ``opposite'' vertex, at the maximal distance of \(n\). (Equivalently, one can construct \(FQ_n\) by gluing the opposite vertices of \(Q_{n+1}\) to one another.)  For instance, \(FQ_2\) is the complete graph on \(4\) vertices; \(FQ_3\) is the complete bipartite graph \(K_{4,4}\); and \(FQ_4\) and \(FQ_5\) are known as the Clebsch graph\footnote{This name was given in \cite{clebsch}, due to the graph's relation to the configuration of 16 lines on a quartic surface discovered by Alfred Clebsch.} and the Kummer graph\footnote{This name was given in \cite{kummer}, due to the graph's relation to the Kummer configuration of 16 points and 16 planes.}, respectively.

\begin{figure}[hbt]
    \centering
    \includegraphics[scale=0.6]{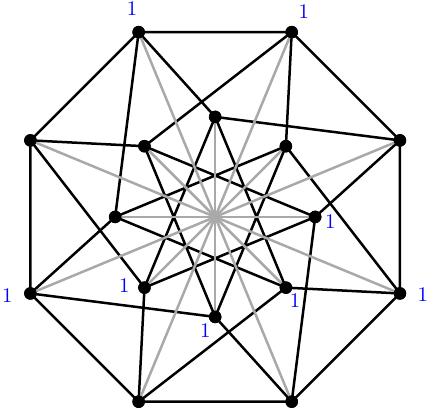}
    \caption{The Clebsch graph \(FQ_4\) with a divisor of positive rank.  The black edges form the hypercube \(Q_4\), with the grey edges connecting diagonals.}
    \label{figure:clebsch}
\end{figure}

The Clebsch graph \(FQ_4\) is illustrated in Figure \ref{figure:clebsch} with a divisor of degree \(8\).  This divisor has positive rank as can be checked manually, or by noting that the support of the divisor is a strong separator and applying Lemma \ref{lemma:strong_separator}.  Similary, the Kummer graph \(FQ_5\) has a positive rank divisor of degree \(16\); this can be seen most easily by noting that \(FQ_5\) is a bipartite\footnote{The folded cube graph \(FQ_n\) is bipartite if and only if \(n\) is odd by \cite[Theorem 2.1]{folded}.} graph with partite sets of size \(16\).  Thus we have \(\gon(FQ_4)\leq 8\) and \(\gon(FQ_5)\leq 16\).  In general, gonality is not monotone under taking subgraphs; however, scramble number is, as proved in \cite[Proposition 4.3]{scramble}. Thus by Proposition \ref{theorem:gonalityQ4} we have \(8=\sn(Q_4)\leq \sn(FQ_4)\leq \gon(FQ_4)=8 \), implying \(\sn(FQ_4)=\gon(FQ_4)=8\); and by Theorem \ref{theorem:gonalityQ5} we have \(16=\sn(Q_5)\leq \sn(FQ_5)\leq \gon(FQ_5)=16 \), implying \(\sn(FQ_5)=\gon(FQ_5)=16\).

\section{The complexity of egg-cut problems}
\label{section:np_hard}

It was shown in \cite{echavarria2021scramble} that computing \(\sn(G)\) is NP-hard.  It was  remarked in that paper that computing the order of a single scramble \(\mathcal{S}\) is also NP-hard; this is because \(h(\mathcal{S})\) is NP-hard to compute, even when all subsets overlap and we have \(e(\mathcal{S})=\infty\), yielding \(||\mathcal{S}||=h(\mathcal{S})\).  In this section we consider the complexity of computing \(e(\mathcal{S})\).

We introduce the following two problems related to egg-cuts.

\noindent \textbf{Egg-Cut Finiteness (ECF):}
\begin{enumerate}
    \item \textbf{Instance:} A scramble $\mathcal{S} = \{V_1,\ldots, V_s\}$ on a graph $G$.
    \item \textbf{Question:} Is $e(S)$ finite?
\end{enumerate}


\noindent \textbf{Egg-Cut Number (ECN):}
\begin{enumerate}
    \item \textbf{Instance:} A scramble $\mathcal{S} = \{V_1, \ldots, V_s\}$ on a graph $G$.
    \item \textbf{Output:} \(e(\mathcal{S})\).
\end{enumerate}

In studying the complexity of these problems, there are two natural ways to measure the size of the input:  in terms of the graph \(G\) alone (i.e.~in terms of \(|V(G)|\) and \(|E(G)|\)), and in terms of \(G\) as well as \(s=|\mathcal{S}|\).  We refer to the first as the \(G\)-interpretation, and the second as the \((G,s)\)-interpretation.  We remark that under either interpretation ECN is at least as hard as ECF, since computing \(e(\mathcal{S})\) immediately answers whether it is finite.

To study these problems, we recall the following problem from \cite{montejano2016complexity}:

\noindent \textbf{Existential Restricted Edge-Connectivity (EREC):}
\begin{enumerate}
    \item \textbf{Instance:} A graph \(G\) and a positive integer \(k\).
    \item \textbf{Question:} Is \(G\) \(\lambda_k\)-connected?
\end{enumerate}

\begin{theorem}
Under the \(G\)-interpretation, the problems ECF and ECN are both NP-hard, even for graphs of maximum degree \(5\).  Moreover, ECF is NP-complete.
\end{theorem}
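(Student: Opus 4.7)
The strategy is a many-one reduction from EREC to ECF (and simultaneously to ECN), using the \(k\)-uniform scramble as the bridge. Since an earlier lemma in this section gives \(e(\mathcal{E}_k) = \lambda_k(G)\), the map \((G, k) \mapsto (G, \mathcal{E}_k)\) sends an EREC instance to an ECF instance whose answers coincide: \(e(\mathcal{E}_k)<\infty\) if and only if \(G\) is \(\lambda_k\)-connected. The same map reduces EREC to ECN, because computing \(e(\mathcal{E}_k)\) immediately decides whether it is finite. Under the \(G\)-interpretation, the input size of the output instance is \(|V(G)|+|E(G)|\), so the (implicitly represented) scramble \(\mathcal{E}_k\) does not inflate the input and the reduction runs in time polynomial in the measured size.

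For the maximum-degree-\(5\) restriction, I would either invoke a bounded-degree version of EREC from \cite{montejano2016complexity} or precede the above reduction with a degree-reducing gadget substitution on \(G\): each high-valence vertex would be replaced by a small rigid subgraph of degree at most \(5\), wired so that the modified graph is \(\lambda_{k'}\)-connected (for a suitably shifted \(k'\)) iff the original \(G\) is \(\lambda_k\)-connected. Because the paper's valence counts multiplicity, the gadget cannot rely on heavy parallel edges; instead I would use a structurally rigid simple-graph block (such as a cycle or a tree-with-chords) whose only cheap bipartitions are ruled out by construction.

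For ECF \(\in\) NP under the \(G\)-interpretation, a certificate is a partition \((A, A^C)\) of \(V(G)\) together with two disjoint witness eggs \(E_i \subseteq A\) and \(E_j \subseteq A^C\). Each object is polynomial-size in \(|V(G)|\), and verification reduces to checking disjointness, containment, and membership \(E_i, E_j \in \mathcal{S}\). For the succinctly described \(\mathcal{E}_k\), membership is just checking \(|E|=k\) and that \(G[E]\) is connected, both polynomial in \(|G|\). Combined with the hardness reduction above, this yields NP-completeness for ECF.

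The main obstacle is securing the maximum-degree-\(5\) restriction. If \cite{montejano2016complexity} already delivers NP-hardness of EREC for graphs of bounded maximum degree, we are done; otherwise the gadget must preserve \(\lambda_k\)-finiteness in both directions, and the delicate part is ruling out spurious low-cost \(\lambda_{k'}\)-cuts interior to the gadget without resorting to heavy multi-edges. I expect this to be handled by a rigidity argument forcing any minimum-cost \(\lambda_{k'}\)-cut of the gadgeted graph to respect the original edge set of \(G\); the rest of the proof is essentially bookkeeping around the identity \(e(\mathcal{E}_k) = \lambda_k(G)\).
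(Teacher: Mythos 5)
Your proposal follows essentially the same route as the paper: the identity reduction $(G,k)\mapsto(G,\mathcal{E}_k)$ from EREC, with NP membership certified by a pair of disjoint eggs. The degree-$5$ gadget you sketch as a fallback is unnecessary, since \cite[Theorem 7]{montejano2016complexity} already establishes NP-completeness of EREC for graphs of maximum degree $5$, which is exactly what the paper invokes.
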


\begin{proof}
Given an instance \((G,k)\) of EREC, consider the instance \((G,\mathcal{E}_k)\) of ECF.  The answer to EREC on \((G,k)\) is yes if and only if \(G\) is \(\lambda_k\)-connected, if and only if \(e(\mathcal{E}_k)<\infty\), if and only if the answer to ECF on \((G,\mathcal{E}_k)\) is yes. Since EREC is NP-complete even for graphs of maximum degree \(5\) \cite[Theorem 7]{montejano2016complexity}, we know that ECF is NP-hard for such graphs; and as ECN is at least as hard as ECF, the same holds for it as well.

To see that ECF is in NP, we need to show that there exists a polynomial-time verifiable certificate for any yes instance \((G,\mathcal{S})\).  Note that \((G,\mathcal{S})\) is a yes instance if and only if \(e(\mathcal{S})<\infty\), if and only if there exist two disjoint eggs \(V_i,V_j\in\mathcal{S}\).  Such a pair of eggs is a certificate of a yes instance, and the disjointness of \(V_i\) and \(V_j\) can be verified in \(O(|V(G)|^2)\) time.  Since ECF is both NP-hard and in NP, we conclude that ECF is NP-complete.
\end{proof}

We now prove that under the \((G,s)\)-interpretation, both of our egg-cut problems are easy.

\begin{theorem}
Under the \((G,s)\)-interpretation, there is a polynomial-time algorithm to solve ECF and ECN.
\end{theorem}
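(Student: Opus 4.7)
The plan is to reduce both problems to computing minimum $s$--$t$ edge cuts, one for each ordered (or unordered) pair of disjoint eggs, and to observe that the number of such pairs is polynomial in the input size under the $(G,s)$-interpretation.

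The key structural observation is that any egg-cut $(A,A^C)$ must contain two disjoint eggs: by definition there exist $E_i \subset A$ and $E_j \subset A^C$, and since $A \cap A^C = \emptyset$, we have $E_i \cap E_j = \emptyset$. Conversely, whenever two eggs $E_i, E_j$ are disjoint, any minimum edge cut separating $E_i$ from $E_j$ yields a valid egg-cut. Hence
\[
e(\mathcal{S}) \;=\; \min_{\substack{i < j \\ E_i \cap E_j = \emptyset}} \mathrm{mincut}(E_i, E_j),
\]
with the convention that the minimum over the empty set is $\infty$. In particular, $e(\mathcal{S})$ is finite if and only if there exists at least one pair of disjoint eggs.

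For ECF, I would loop over all $\binom{s}{2}$ pairs of eggs and test disjointness; each test takes $O(|V(G)|)$ time (e.g.\ via a boolean array indexed by vertices), for a total of $O(s^2 |V(G)|)$ time, which is polynomial in the $(G,s)$-interpretation. For ECN, I would, for each disjoint pair $(E_i,E_j)$, compute $\mathrm{mincut}(E_i, E_j)$ via a standard reduction to max-flow: form an auxiliary graph by contracting $E_i$ to a single super-source $s^*$ and $E_j$ to a single super-sink $t^*$ (identifying the endpoints of incident edges and keeping multiplicities), and compute the maximum $s^*$--$t^*$ flow in the resulting unit-capacity multigraph. By the max-flow min-cut theorem this equals the minimum number of edges separating $E_i$ from $E_j$ in $G$, and max-flow on a graph with $O(|V(G)|)$ vertices and $O(|E(G)|)$ edges runs in polynomial time. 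Taking the minimum over all $O(s^2)$ disjoint pairs (or returning $\infty$ if none exist) yields $e(\mathcal{S})$ in time polynomial in $|V(G)|$, $|E(G)|$, and $s$.

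There is no substantial obstacle here; the only point worth checking carefully is the correctness of the contraction step. One must verify that an edge cut in the contracted graph corresponds bijectively to an edge cut in $G$ that places $E_i$ on one side and $E_j$ on the other, and that no edge internal to $E_i$ or $E_j$ is ever counted (these become loops under contraction and are discarded). Given this, both ECF and ECN are solvable in polynomial time in the $(G,s)$-interpretation, completing the proof.
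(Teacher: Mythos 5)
Your proposal is correct and follows essentially the same route as the paper: iterate over all $\binom{s}{2}$ pairs of eggs, discard non-disjoint pairs, contract each of $E_i$ and $E_j$ to a single vertex, and compute the minimum $u$--$v$ edge cut via max-flow, taking the overall minimum (with $\infty$ if no disjoint pair exists). Your explicit justification that $e(\mathcal{S})$ equals the minimum of these pairwise cuts, and your note about internal edges becoming discarded loops under contraction, make the correctness of the reduction slightly more explicit than the paper's version, but the argument is the same.
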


\begin{proof}
Suppose we are given an instance \((G,\mathcal{S})\) of ECN.  There are \(s \choose 2\) pairs \(V_i,V_j\) of eggs.  Since  \({s \choose 2}=\frac{s(s-1)}{2}\) is polynomial in \(s\), it suffices to show that we can compute in polynomial time the size of a minimum egg-cut separating \(V_i\) from \(V_j\).  Without loss of generality we assume \(V_i\cap V_j=\emptyset\) (this can be checked in \(O(n^2)\) time).

Define the graph \(G_{i,j}\) as follows.  Collapse the subgraph \(G[V_i]\) to a vertex \(u\), possibly introducing multi-edges if multiple vertices of \(V_i\) are incident to the same vertex of \(V_i^C\).  Do the same for \(G[V_j]\) to a vertex \(v\).  Given a subset \(A\subset V(G)\) and its complement \(B=A^C\subset V(G)\), we can consider the corresponding subsets \(A'\) and \(B'\) in \(G_{i,j}\).  By construction, we have \(|V_G(A,B)|=|V_{G_{i,j}}(A',B')|\); it follows that the smallest egg-cut separating \(V_i\) from \(V_j\) has the same size as the smallest \(uv\)-edge-cut in \(G_{i,j}\).  This is the undirected version of a max-flow/min-cut problem, which can be solved in \(O(n|E(G)|^2)\) time using the Edmonds-Karp algorithm \cite{edmonds_karp}.  We conclude that \(e(\mathcal{S})\) can be computed in \(\textrm{poly}(n,s)\)-time.

Our polynomial-time algorithm for ECN also answers ECF in the same time, completing the proof.
\end{proof}

\noindent \textbf{Acknowledgements.}  The authors were supported by Williams College and the SMALL REU, and by the NSF via grants DMS-1659037 and DMS-2011743.

\bibliographystyle{abbrv}

\end{document}